\newcommand{\R}{{\mathbb R}}
\newcommand{\N}{{\mathbb N}}
\newcommand{\mesh}{{\rm Mesh}}
\newcommand{\len}{{\rm Len}}
\newcommand{\defeq}{:=}
\DeclareMathOperator{\diam}{diam}
\DeclareMathOperator{\lip}{lip}
\DeclareMathOperator{\LIP}{LIP}
\def\vint_#1{\mathchoice
          {\mathop{\vrule width 6pt height 3 pt depth -2.5pt
                  \kern -8pt \intop}\nolimits_{#1}}%
          {\mathop{\vrule width 5pt height 3 pt depth -2.6pt
                  \kern -6pt \intop}\nolimits_{#1}}%
          {\mathop{\vrule width 5pt height 3 pt depth -2.6pt
                  \kern -6pt \intop}\nolimits_{#1}}%
          {\mathop{\vrule width 5pt height 3 pt depth -2.6pt
                  \kern -6pt \intop}\nolimits_{#1}}}
\theoremstyle{plain}
\newtheorem{theorem}{Theorem}[section]
\newtheorem{corollary}[theorem]{Corollary}
\newtheorem{lemma}[theorem]{Lemma}
\theoremstyle{definition}
\newtheorem{remark}[theorem]{Remark}
\title{Density of Lipschitz functions in Energy}
\author{Sylvester Eriksson-Bique}
\address{Sylvester Eriksson-Bique\\
Department of Mathematics\\
Jyv\"askyl\"a University,
 P.O. Box 35 (MaD),
FI-40014, Jyv\"askyl\"a, Finland}
\email{\tt syerikss@jyu.fi}
\subjclass[2010]{Primary 46E35,  Secondary 30L99, 26B30,  28A12}
\begin{document}

\begin{abstract}
In this paper, we show that the density in energy of Lipschitz functions in a Sobolev space $N^{1,p}(X)$ holds for all $p\in [1,\infty)$ whenever the space $X$ is complete and separable and the measure is Radon and finite on balls.  Emphatically, $p=1$ is allowed.  We also give a few   corollaries and pose questions for future work.

The proof is direct and does not involve the usual flow techniques from prior work. It also yields a new approximation technique, which has not appeared in prior work. Notable with all of this is that we do not use any form of Poincar\'e inequality or doubling assumption. The techniques are flexible and suggest a unification of a variety of existing literature on the topic.

\end{abstract}
\maketitle

\section{Introduction}

In this paper,  we study the density of Lipschitz functions in Sobolev spaces when $X$ is complete and separable, and $\mu$ is any Radon measure on $X$ which is is positive and finite on balls.  We consider the so-called Newton-Sobolev space $N^{1,p}(X)$ defined in \cite{sha00},  see also \cite{shabook}, which for $p>1$ coincides with the one introduced earlier in \cite{che99}. A function $f$ is in $N^{1,p}(X)$ if $f\in L^p(X)$ and if it has an upper gradient $g\in L^p(X)$; see equation \eqref{eq:ug}. Associated to each $f$ there is a \emph{minimal $p$-weak upper gradient} $g_f \in L^p(X)$, which plays the role of the norm of a gradient. We will give precise definitions of these and the following notations in Section \ref{sec:proof}.

Our main result proves \emph{density in energy}  or, rather, produces a sequence of Lipschitz functions which converges in energy. A sequence of functions $(f_i)_{i\in \N}$, with $f_i\in N^{1,p}(X)$ converges to $f\in N^{1,p}(X)$ in energy, if the functions $f_i$ converge to $f$ in $L^p(X)$ and if their minimal $p$-weak upper gradients $g_{f_i}$ converge to $g_f$ in $L^p(X)$. Our sequences of functions $f_i$ will be Lipschitz functions with bounded support, that is $f_i \in \LIP_b(X) \subset N^{1,p}(X)$. Our argument in fact shows more than the convergence of the minimal $p$-weak upper gradients. It shows that the asymptotic Lipschitz constants converge; 
\[
\lip_a[f](x)\defeq \lim_{r\to 0} \sup_{a\neq b\in B(x,r)} \frac{|f(a)-f(b)|}{d(a,b)},
\]
and $\lip_a[f](x) \defeq 0$, if $x$ is an isolated point. We remark, that from the definition it follows that $\lip_a[f]\geq g_f$ for any $f\in \LIP_b(X)\cap N^{1,p}(X)$, and thus, convergence of $\lip_a[f]$ is a stronger statement. Further, we give a new and flexible way to obtain these approximating functions $f_i$ without the need for tools beyond basic real analysis. 

\begin{theorem}\label{thm:densinenergy} Let $X$ be a complete and separable metric space, $p\in [1,\infty)$ and $\mu$ a Radon measure  
which is positive and finite on balls. If $f\in N^{1,p}(X)$, then there exists a sequence $f_i\in N^{1,p}(X)\cap\LIP_b(X)$ so that the following properties hold.
\begin{enumerate}
\item The functions $f_i$ converge in $L^p(X)$ to $f$, that is \[\lim_{i\to\infty} \int |f_i-f|^p d\mu=0.\]
\item The asymptotic Lipschitz constants $\lip_a[f_i]$ and the minimal $p$-weak upper gradients $g_{f_i}$ of $f_i$ converge to the minimal $p$-weak upper gradient $g_f$ of $f$ in $L^p(X)$, that is 
\[\lim_{i\to\infty} \int |g_{f_i}-g_f|^p d\mu = \lim_{i\to\infty} \int |\lip_a[f_i]-g_f|^p d\mu = 0.\] 
\end{enumerate}
\end{theorem}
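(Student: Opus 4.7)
The plan is to construct the approximating sequence $f_i$ via McShane-type Lipschitz extensions from carefully chosen ``good sets'' where $f$ already has controlled pointwise Lipschitz behavior. First I would reduce to the case where $f$ is bounded with bounded support. Standard truncations $f^M := \max(-M, \min(M, f))$ do not increase the minimal upper gradient, and multiplication by a Lipschitz cutoff $\phi_R$ with $\phi_R = 1$ on $B(x_0, R)$ and $\phi_R = 0$ off $B(x_0, R+1)$ adds an error term supported in an annulus of finite $\mu$-measure. Since $\mu$ is Radon and $g_f^p \in L^1(\mu)$, a diagonal argument in $(M,R)$, together with the fact that the conclusion of the theorem is stable under $L^p$-limits of the data, reduces everything to bounded $f$ with bounded support.

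\textbf{Good sets and McShane extension.} For each $\lambda > 0$, I would define a good set $E_\lambda \subset X$ consisting of points $x$ where $f$ satisfies a local pointwise Lipschitz bound of the form $|f(x) - f(y)| \leq \lambda\, d(x,y)$ for all $y$ in a suitable neighborhood of $x$. Because no Poincar\'e inequality or doubling condition is available, this bound cannot be extracted from a maximal-function estimate on $g_f^p$ in the usual Haj\l{}asz--Koskela--Tuominen manner. Instead, one would work directly with the curve-based upper gradient inequality
\[
|f(\gamma(a)) - f(\gamma(b))| \leq \int_\gamma g_f\, ds
\]
along $p$-almost every rectifiable curve, combined with a $p$-modulus computation at a family of carefully chosen scales, to identify a large set $E_\lambda$ on which the required pointwise estimate holds. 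On $E_\lambda$, the restriction $f|_{E_\lambda}$ is $\lambda$-Lipschitz, so the McShane formula
\[
\tilde f_\lambda(x) := \inf_{y \in E_\lambda} \bigl( f(y) + \lambda\, d(x,y) \bigr)
\]
produces a $\lambda$-Lipschitz function on $X$ agreeing with $f$ on $E_\lambda$. After one more truncation at height $M$ and multiplication by a cutoff, this yields the candidate $f_\lambda \in \LIP_b(X)$.

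\textbf{Convergence in energy and asymptotic Lipschitz constants.} For $\lambda = \lambda_i \to \infty$, I would verify (i) $\mu(X \setminus E_\lambda) \to 0$, which reduces to $g_f < \infty$ $\mu$-a.e., and (ii) $\int_{X \setminus E_\lambda} g_f^p\, d\mu \to 0$, which follows from absolute continuity of the integral once (i) is established. On the interior of $E_\lambda$, $f_\lambda$ coincides with $f$ locally, so $\lip_a[f_\lambda] = \lip_a[f] \leq g_f$ there (up to a set of measure zero, using that $f$ itself is locally Lipschitz on $E_\lambda$). Off $E_\lambda$, one has the uniform bound $\lip_a[f_\lambda] \leq \lambda$, and the key weak-type estimate $\lambda^p \mu(X \setminus E_\lambda) \leq C \int_{X \setminus E_\lambda} g_f^p\, d\mu$ (which would be built into the construction of $E_\lambda$) ensures that the $L^p$-contribution from the bad set tends to zero. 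Combining gives $\lip_a[f_\lambda] \to g_f$ in $L^p$, and since $g_{f_\lambda} \leq \lip_a[f_\lambda]$ with $g_f \leq \liminf g_{f_\lambda}$ a.e.\ by standard lower semicontinuity, both convergences in (2) follow.

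\textbf{Main obstacle.} The hardest step is the construction of $E_\lambda$ and the accompanying weak-type estimate in the absence of both Poincar\'e inequalities and the doubling condition, particularly at the endpoint $p = 1$ where even ordinary strong maximal-function theorems fail. The approach would have to replace maximal-function-based arguments by a direct modulus computation along curve families, carefully balancing the choice of scales so that the $\lambda^p \mu(X \setminus E_\lambda)$ term is majorized by a tail of $\int g_f^p\, d\mu$. This is likely the ``new approximation technique'' advertised in the abstract, and getting the correct curve/scale bookkeeping is where the real analytic work lies.
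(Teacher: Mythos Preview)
Your strategy has a genuine gap at its core: the Lipschitz--Lusin property you are relying on simply fails in the generality of the theorem. Without a Poincar\'e inequality (and doubling), a function $f\in N^{1,p}(X)$ need \emph{not} be Lipschitz on any subset of positive measure, so the good sets $E_\lambda$ you postulate may not exist at all. The paper states this explicitly when discussing Cheeger's sub-level set approach. A clean way to see the obstruction is to take a snowflaked space, e.g.\ $X=([0,1],|x-y|^\alpha)$ with $\alpha<1$ and Lebesgue measure: there are no nonconstant rectifiable curves, so $N^{1,p}(X)=L^p(X)$ with $g_f\equiv 0$ for every $f$. Your scheme would require every $L^p$ function to be $\lambda$-Lipschitz in the snowflake metric (i.e.\ $\alpha$-H\"older in the Euclidean metric) on sets of measure tending to $1$, together with the weak-type bound $\lambda^p\mu(X\setminus E_\lambda)\le C\int_{X\setminus E_\lambda} g_f^p\,d\mu=0$, forcing $\mu(X\setminus E_\lambda)=0$ for every $\lambda$. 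That is absurd for generic $f\in L^p$. More generally, the weak-type inequality you ``build into the construction'' is exactly the maximal-function/Poincar\'e mechanism you acknowledged is unavailable; there is no curve-modulus argument that manufactures it, because modulus sees only rectifiable curves and in spaces with few (or no) such curves it carries no information about pointwise oscillation of $f$. A secondary issue: even when $E_\lambda$ exists, its interior can be empty, so your claim that $\lip_a[f_\lambda]=\lip_a[f]$ on the interior of $E_\lambda$ gives you nothing.

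The paper takes a completely different route that sidesteps the Lusin property. After the same reductions to bounded, nonnegative, boundedly supported $f$, it fixes a lower semicontinuous true upper gradient $g_\epsilon$ close to $g_f$ (with extra penalty terms forcing discrete paths to stay near a nested family of compact sets), chooses a closed set $A$ on which $f$ is merely \emph{continuous} (Lusin, not Lipschitz--Lusin), and defines
\[
\tilde f(x)=\min\Bigl\{M,\ \inf_{(p_0,\dots,p_n)}\ f(p_0)+\sum_{k=0}^{n-1} g_n(p_k)\,d(p_k,p_{k+1})\Bigr\},
\]
the infimum over discrete paths of mesh $\le\delta$ starting in $A$ and ending at $x$, where $g_n\nearrow g_\epsilon$ are bounded continuous. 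This formula gives $\lip_a[\tilde f]\le g_n\le g_\epsilon$ \emph{for free} by one-step concatenation, so the upper-gradient side is controlled from the outset rather than a posteriori. The hard part is pointwise convergence $\tilde f\to f$ on $A$ as $\delta\to 0$, proved by contradiction: a failure produces a sequence of near-optimal discrete paths which, via an Arzel\`a--Ascoli argument in $\ell_\infty(\N)$ (precompactness being enforced by the penalty terms in $g_\epsilon$), subconverges to a genuine rectifiable curve violating the upper gradient inequality for $g_\epsilon$. Lemma~\ref{lem:convergence} then upgrades $\lip_a[f_i]\to g_f$ to $g_{f_i}\to g_f$.
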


The $p>1$ case of the theorem is covered by \cite{ambgigsav}, however by inexplicit and different methods. Further, the present result applies to the case $p=1$,  and gives a conceptually new way of obtaining their result. The case of $p=1$ is of particular importance in applications that use the co-area inequality, as can be seen from the concurrent work \cite{pietroeb}.

\begin{remark}
Convergence in energy is \textit{weaker} than convergence in norm. In the latter, for any $\epsilon>0$ we seek functions $f_\epsilon$ which are Lipschitz so that $\|f-f_\epsilon\|_{N^{1,p}}\leq \epsilon$. In particular, this requires that the  minimal $p$-weak upper gradients $g_{f-f_{\epsilon}}$ of the differences   $f-f_{\epsilon}$ converge to $0$ in $L^p(X)$ with $\epsilon \to 0$. However, if $f_\epsilon$ converges in energy to $f$ as $\epsilon\to 0$, then we only know that the difference  $g_f-g_{f_\epsilon}$ converges to $0$ in $L^p(X)$. Crucially, (a.e.) we have $g_{f-f_{\epsilon}} \geq |g_f-g_{f_\epsilon}|$. This means, that convergence in norm is a stronger statement than convergence in energy.

Next, consider $[0,1]^2$ equipped with the Lebesgue measure and $\ell_1$-metric $d((x_1,y_1),(x_2,y_2))=|x_1-y_1|+|x_2-y_2|$. Let $f(x,y)=x$, and $f_n(x,y)=x+n^{-1}\sin(ny)$, and consider any $p\in [1,\infty)$. Since $f_n \in N^{1,p}(X)$ is smooth, its minimal $p$-weak upper gradient is simply the norm of its gradient vector measured using the dual norm of $\ell_1$. That is, $g_{f_n}=\max\{|\partial_x f_n|, |\partial_y f_n|\} = 1 = g_f$. In particular, $f_n \to f$ in $L^p([0,1]^2)$ and $g_{f_n}=g_f=1$. However, $g_{f_n-f}=|\cos(ny)|$, which does not converge to $0$ in $L^p(X)$. Thus, the functions $f_n$ converge to $f$ in energy, but not in norm. If one uses any other uniformly convex norm on $\R^2$ to define the metric, then this phenomenon does not occur. Further, in many settings, one can take convex combinations of $f_n$ which converge in norm -- even if $f_n$ only converge in energy. Indeed, density in energy implies density in norm, if one has some finite dimensionality. This idea leads to some recent applications of the present paper, e.g. \cite[Theorem 1.9]{teriseb}.

\end{remark}
Despite the weakness of density in energy, for many arguments it suffices. One application is for proving equivalence of various types of Poincar\'e inequalities. We say that a pair $(u,g)$ satisfies a $p$-Poincar\'e inequality (with constants $(C,\Lambda)$) if for each ball $B(x,r) \subset X$
\begin{equation}\label{p-PI}
\vint_{B(x,r)} |u-u_{B(x,r)}| ~d\mu \leq Cr \left( \vint_{B(x,\Lambda r)} g^p ~d\mu \right)^{1/p},
\end{equation}
where we define the average by $f_A\defeq \vint_A f ~d\mu \defeq \frac{1}{\mu(A)} \int_A f ~d\mu$, when the final expression is well defined. For the statement of the following corollary, we define:
\[\lip[f](x)\defeq \liminf_{y\to x, x\neq y} \frac{|f(x)-f(y)|}{d(x,y)}\]
(and $\lip[f](x)\defeq 0$, if $x$ is isolated).

\begin{corollary} Let $X$ be a complete and separable metric space, $p\in [1,\infty)$ and $\mu$ a Radon measure  
which is positive and finite on balls. For any fixed constants $(C,\Lambda) \in (0,\infty)^2$ the following three are equivalent.

\begin{enumerate}
\item For every Lipschitz function $f:X \to \R$ the pair $(f,\lip_a[f])$ satisfies a $p$-Poincar\'e inequality with constants $(C,\Lambda)$.
\item For every Lipschitz function $f:X \to \R$  the pair $(f,\lip[f])$ satisfies a $p$-Poincar\'e inequality with constants $(C,\Lambda)$.
\item For every  $f \in N^{1,p}(X)$ the pair $(f,g_f)$ satisfies a $p$-Poincar\'e inequality with constants $(C,\Lambda)$.
\end{enumerate}
\end{corollary}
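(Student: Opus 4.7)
My plan is to establish the cycle $(3) \Rightarrow (2) \Rightarrow (1) \Rightarrow (3)$. The step $(2) \Rightarrow (1)$ is immediate from the pointwise inequality $\lip[f] \leq \lip_a[f]$ (directly from the definitions): enlarging the upper-gradient factor in \eqref{p-PI} only weakens the inequality. The step $(3) \Rightarrow (2)$ is a similar monotonicity argument, resting on the inequality $g_f \leq \lip[f]$ a.e.\ for any Lipschitz $f$; this in turn follows from the classical fact that $\lip[f]$ is a $p$-weak upper gradient of a Lipschitz function, combined with minimality of $g_f$. Granting this, the PI for $(f,g_f)$ guaranteed by (3) upgrades first to the PI for $(f, \lip[f])$ and then to the PI for $(f, \lip_a[f])$, all with the same constants $(C, \Lambda)$.

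The heart of the argument is $(1) \Rightarrow (3)$, and Theorem \ref{thm:densinenergy} provides the key tool. Given $f \in N^{1,p}(X)$, I would produce an approximating sequence $f_i \in \LIP_b(X) \cap N^{1,p}(X)$ with $f_i \to f$ and $\lip_a[f_i] \to g_f$ in $L^p(X)$. Fixing any ball $B = B(x,r)$ and its $\Lambda$-dilate $B' = B(x, \Lambda r)$, applying (1) to each $f_i$ yields
\[
\vint_{B} |f_i - (f_i)_B| \, d\mu \leq Cr \left( \vint_{B'} \lip_a[f_i]^p \, d\mu \right)^{1/p},
\]
and the task reduces to passing to the limit $i \to \infty$ on both sides. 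Finiteness and positivity of $\mu$ on balls ensure that $L^p(X)$-convergence restricts to $L^p$-convergence on $B$ and $B'$; hence the numerical averages $(f_i)_B$ converge to $f_B$, $|f_i - (f_i)_B| \to |f - f_B|$ in $L^p(B)$ (and so in $L^1(B)$ by H\"older), and continuity of the $L^p$-norm gives $\int_{B'} \lip_a[f_i]^p \, d\mu \to \int_{B'} g_f^p \, d\mu$. The limiting inequality is precisely the $p$-PI for $(f, g_f)$ with constants $(C, \Lambda)$, establishing (3).

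I expect the main technical point to be the auxiliary input underlying $(3) \Rightarrow (2)$, namely the inequality $g_f \leq \lip[f]$ a.e.\ for Lipschitz $f$ in the current generality (complete separable $X$, Radon $\mu$, no doubling, no PI, $p \geq 1$). This is standard but non-trivial, typically proved by a Vitali-covering argument along rectifiable curves; it is not part of the new content of this paper. By contrast, the limit argument for $(1) \Rightarrow (3)$, while central, is entirely routine given the hypotheses on $\mu$, so the corollary reduces to a careful application of Theorem \ref{thm:densinenergy} combined with this standard upper-gradient fact.
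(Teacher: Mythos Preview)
Your proposal is correct and follows precisely the approach the paper indicates: the paper states that the proof is ``a simple exercise left to the reader of using Theorem \ref{thm:densinenergy} and the fact that for Lipschitz functions $g_f \leq \lip[f] \leq \lip_a[f]$ (almost everywhere),'' and your cycle $(3)\Rightarrow(2)\Rightarrow(1)\Rightarrow(3)$ with the limit argument for $(1)\Rightarrow(3)$ is exactly this exercise carried out. Your identification of $g_f \leq \lip[f]$ a.e.\ as the one external (but standard) input is also accurate.
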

The statement does not depend on any doubling or properness assumption, as was for example assumed in \cite{keith03}. The proof is a simple exercise left to the reader of using Theorem \ref{thm:densinenergy} and the fact that for Lipschitz functions $g_f \leq \lip[f] \leq \lip_a[f]$ (almost everywhere).  We note that the assumption of completeness can not be removed, as seen from \cite{keithzhong, koskelaremov}. 

A further application of a technical nature is the following, which was pointed out to us by Elefterios Soultanis.

\begin{corollary} Assume that $X$ is complete and separable and equipped with a Radon measure which is positive and finite on balls.  If $f \in N^{1,p}(X)$, then there is a Borel function $\tilde{f}\in N^{1,p}(X)$ so that $\tilde{f}=f$ almost everywhere.
\end{corollary}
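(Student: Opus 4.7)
The plan is to apply Theorem~\ref{thm:densinenergy} to $f$, producing Lipschitz approximations $f_i \in \LIP_b(X)$ with $f_i \to f$ in $L^p(X)$ and $g_{f_i} \to g_f$ in $L^p(X)$. After passing to a subsequence and relabelling, I assume further that both $\sum_i \|f_{i+1}-f_i\|_{L^p}$ and $\sum_i \|g_{f_{i+1}}-g_{f_i}\|_{L^p}$ are finite and that $f_i \to f$ pointwise $\mu$-almost everywhere. I then set
\[
\tilde f(x) := \begin{cases} \lim_{i \to \infty} f_i(x), & \text{if the limit exists in } \R, \\ 0, & \text{otherwise.} \end{cases}
\]
Since each $f_i$ is continuous, the set where the limit exists is Borel, so $\tilde f$ is Borel; and $\tilde f = f$ $\mu$-a.e.\ by construction, placing $\tilde f$ in $L^p(X)$.

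To verify $\tilde f \in N^{1,p}(X)$, I would exhibit $g_f$ itself as a $p$-weak upper gradient. Put $G := g_{f_1} + \sum_i |g_{f_{i+1}} - g_{f_i}|$, which lies in $L^p(X)$ by Minkowski and dominates every $g_{f_i}$ pointwise via telescoping. Fuglede's lemma then produces a $p$-modulus-null curve family outside of which the following hold simultaneously for every rectifiable $\gamma : [0,\ell] \to X$ parameterised by arclength: (i) $\int_\gamma G \, ds < \infty$; (ii) $\sum_i \int_\gamma |f_{i+1}-f_i|\, ds < \infty$; (iii) $\int_\gamma |g_{f_i}-g_f|\, ds \to 0$; and (iv) every $f_i$ satisfies the upper-gradient inequality $|f_i(\gamma(s))-f_i(\gamma(t))| \leq \int_s^t g_{f_i}(\gamma(r))\, dr$ on every subinterval of $[0,\ell]$.

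For such a curve $\gamma$, condition (i) together with $g_{f_i} \leq G$ makes the family $\{f_i \circ \gamma\}$ uniformly absolutely continuous, hence equicontinuous on $[0,\ell]$. A standard averaging of condition (ii) against this equicontinuity shows both that $\{f_i(\gamma(0))\}$ is Cauchy and that $f_i \circ \gamma$ is uniformly Cauchy, so it converges uniformly on $[0,\ell]$ to some continuous function $F_\gamma$. In particular the pointwise limit of $f_i$ exists at \emph{every} point of $\gamma$, so $\tilde f \circ \gamma = F_\gamma$ by the definition of $\tilde f$. Passing to the limit in (iv) using (iii) yields
\[
|\tilde f(\gamma(a)) - \tilde f(\gamma(b))| \leq \int_\gamma g_f \, ds
\]
for $p$-almost every curve, so $g_f$ is a $p$-weak upper gradient of $\tilde f$ and $\tilde f \in N^{1,p}(X)$.

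The step I expect to be the main obstacle is precisely the passage from a Borel representative to one with an upper gradient: pointwise $\mu$-a.e.\ convergence of $f_i$ reveals nothing about $\{f_i \circ \gamma\}$ on a generic curve, so one cannot directly pass to the limit in (iv) starting from an arbitrary $\mu$-a.e.\ representative. Routing the argument through Fuglede and the dominating envelope $G$ — which is available precisely because Theorem~\ref{thm:densinenergy} furnishes energy convergence rather than merely $L^p$ convergence — is what forces the uniform convergence along $p$-a.e. curve and pins down $\tilde f$ as a genuine element of $N^{1,p}(X)$.
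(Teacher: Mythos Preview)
Your proof is correct and follows the same route as the paper: take the Lipschitz approximants from Theorem~\ref{thm:densinenergy}, pass to a pointwise-a.e.\ convergent subsequence, and set $\tilde f$ equal to the pointwise limit. The paper obtains $\tilde f\in N^{1,p}(X)$ by invoking \cite[Lemma~7.3]{haj03}, whereas you essentially reprove that lemma via Fuglede and an equicontinuity argument along curves --- the content is the same, though your ``standard averaging'' step would read more cleanly as: condition~(ii) gives $\sum_i|f_{i+1}-f_i|\circ\gamma<\infty$ a.e.\ on $[0,\ell]$, hence $f_i\circ\gamma$ converges at a dense set of parameters, and equicontinuity then upgrades this to uniform convergence.
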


Note that, we only require that $f$ is measurable. Further, note that the Newton-Sobolev condition involves a pointwise consideration. Thus,  a direct modification using Borel regularity does not yield the result, since it may break the Newton condition. The proof of this corollary follows immediately from Theorem \ref{thm:densinenergy} by considering a subsequence of $f_i$ converging pointwise and their limit together with \cite[Lemma 7.3]{haj03}. We remark, that \emph{a posteriori} also $f=\tilde{f}$ at capacity almost every point, or quasi-everywhere, see \cite[Corollary 3.3]{sha00}. 

The question of the density of continuous and Lipschitz functions is also crucial in other contexts. For example, quasi-continuity properties of Sobolev functions are implied by density in norm,  see \cite{bjornnages} and \cite{malylukas}. While we only get density in energy, it seems our techniques could have something to say in these contexts as well.  In conclusion, it appears that density of continuous and Lipschitz functions in energy in Newton-type Sobolev spaces defined using upper gradients is more generic than appears from existing literature.

\subsection{Approximation scheme}
Before we describe our scheme, which may seem a little confusing, we wish to show how it arises naturally from prior work and to survey existing approximation schemes.  In the following discussion, $f:X\to \R$ will be a Sobolev function, and $\tilde{f}$ will denote its approximation.

In Euclidean spaces, and Lie groups, the simplest way to approximate an $L^p(X)$-function $f$ is via convolution with $\tilde{f}=f\ast \phi_n$, where $\phi_n$ is some approximation of unity. Even without a group structure, one can mimic this process on manifolds, and even some CAT(0) spaces, using various center of mass constructions. See e.g. \cite{karchergrove, karchermollification}. 

In general metric spaces, such a convolution method is missing. One rather old method, is to employ so called discrete convolutions:
\[\tilde{f}(x)= \sum_n \psi_n f_{B_n},\]
where $B_n$ are some balls which cover the space (with finite overlap), and $\psi_n$ is a partition of unity subordinate to $B_n$. Such an approximation goes back to Coifman and Weiss \cite{coifmanweiss}. When $X$ is measure doubling (for some $D\in [1,\infty)$ we have $\mu(B(x,2r))\leq D\mu(B(x,r))$ for each $x\in X, r>0$) and satisfies a Poincar\'e inequality, one can perform a discrete convolution in such a way that $\tilde{f}$ approximates $f$ in the $L^p(X)$-norm, and so that the $N^{1,p}(X)$-norm of $\tilde{f}$ is controlled. For such results, see \cite{kinnunenlatvala}. Further, for applications in other contexts, and earlier results see \cite[P. 290–292]{semmes} and \cite{macias}.

Discrete convolutions have been quite successful in analysis on metric spaces. In fact, the papers using this technique are too numerous to list here. But, some highlights we found are \cite{lahtishan, kinnunenlatvala, kinnunenkortenages}. These papers require strong assumptions: a doubling measure and a Poincar\'e inequality. It is unclear to us, if a discrete convolution approach could be extended to more general settings. Another issue with discrete convolutions, is that even with the aforementioned strong assumptions, they fail to prove Theorem \ref{thm:densinenergy}. Indeed, the approximating functions $\tilde{f}$ have minimal $p$-weak upper gradients $g_{\tilde{f}}$ with a bound on their $L^p(X)$-norms -- but convergence of the minimal $p$-weak upper gradients may fail.

In order to get convergence of gradients, and to weaken assumptions, we need to look beyond. The seminal paper of Cheeger \cite{che99} includes \emph{several} useful approximation schemes, which apply to spaces with a doubling measure and a Poincar\'e inequality:
\begin{enumerate}
\item Theorem 4.24: a sub-level set approach employing an approximation based on the McShane extension in the proof of the existence of a differential for Sobolev functions,
\item Lemma 5.2: an extension using curves similar to the Equation \eqref{eq:approx} (see below), which was used to prove a version of Theorem \ref{thm:densinenergy} and conclude $g_f=\lip[f]$ for length spaces; and 
\item Theorem 6.5: an approximation with piecewise distance function, which was used to prove that $g_f=\lip[f]$ without the length-space assumption.
\end{enumerate}
These methods get much closer to the approximation method introduced in the present paper -- even though the proofs written in \cite{che99} involved the strong assumptions of a Poincar\'e inequality and doubling. These arguments thus do not directly answer our questions. However, with these stronger assumptions,  Theorem 6.5 does prove our Theorem \ref{thm:densinenergy} for $p>1$. 

Each of Cheeger's methods involved \emph{extending} $f|_A$ out from a subset $A\subset X$ -- or approximately interpolating the function while attempting to ensure that $\sup_{a\in A}|\tilde{f}(a)-f(a)|$ is as small as possible. This idea, that approximation and extension problems are connected, is the central theme of our paper. We briefly describe the first two approximations of Cheeger in slightly more detail to illustrate the connection to our method.
 
In the sub-level set approach, one obtains $\tilde{f}$ by taking a sub-level set $A \subset X$ of the Hardy-Littlewood maximal function $M(g_f^p)$, and employing a McShane Lipschitz extension. Here, Cheeger used the fact that $f|_A$ is Lipschitz. This follows from a pointwise Poincar\'e inequality.
This approximation has the remarkable Lusin property: the approximating function $\tilde{f}$ actually agrees with the function $f$ on large measure subsets.   Together with locality of the minimal $p$-weak upper gradient, one gets that $\tilde{f}-f$ has small $N^{1,p}(X)$-norm.
If one wishes to remove the Poincar\'e inequality assumption, then one needs substantially different tools. Further, without a Poincar\'e inequality, one must give up the (Lipschitz-)Lusin property because the function $f \in N^{1,p}(X)$ may not, in general, be Lipschitz on any positive measure subset. 

The second approximation in \cite{che99} takes the form of
\begin{equation}\label{eq:approx}
\tilde{f}(x) = \inf_{\gamma:A \leadsto y} f(\gamma(0)) + \int_\gamma g ~ds,
\end{equation}
where $A\subset X$ is some compact set of points, and the infimum is taken over rectifiable curves $\gamma:[0,1]\to X$ connecting $A$ to $x$ and the integral $\int_\gamma g ~ds$ is the usual curve integral. While the proof, as written, in \cite{che99} does use a Poincar\'e inequality, the full strength of it is not really needed. Indeed, most of the lemmas in \cite{che99} do not use this assumption, and the proof of Lemma 5.2 could be rewritten by using the Lusin property and choosing a set $A$ where $f|_A$ is continuous. By such a modification, the proof would apply to a proper length space. To our knowledge, this has not been observed before. However, we omit the details of this claim, since our main theorem contains a stronger result.

The idea with the previous approximations is to be close to $f$ on the set $A$, while insisting on $g$ being an upper gradient. Such functions have arisen in other settings, where one wishes to prescribe a given upper gradient: see e.g. \cite[Lemma 3.1]{bjornnages}. The main technical problem with the definition in Equation \eqref{eq:approx}, is that it implicitly insists on the existence of rectifiable curves $\gamma$.  Without such curves $\tilde{f}$ may even fail to be continuous -- indeed even the measurability is non-trivial as can be seen for example from the work in \cite{jarvempaa}. 

The issue of a lack of curves has already been identified, and resolved, in limited instances. When proving that a Poincar\'e inequality implies quasiconvexity, \emph{a priori} we are not allowed to assume the existence of any curves.  See for example the beautiful discussion in \cite[Proposition 4.4]{hajkos} where a version of this fact is proved -- or the proof in \cite[Theorem 17.1]{che99} which is originally due to Semmes.  The proof involves ``testing'' the Poincar\'e inequality with functions of the form:
\[
f_{\epsilon,x}(y)\defeq \inf_{p_0,\dots, p_n} \sum_{k=0}^{n-1} d(p_i,p_{i+1}),
\]
with the infimum is taken over all sequences of points, called discrete paths, $p_0,\dots, p_n$ with $d(p_i,p_{i+1})\leq \epsilon$ and $p_0=x,p_n=y$, and $x\in X$ a fixed point. Indeed, our approximating expression will resemble and generalize this expression.

Our approximation combines the two main aspects from before: discrete paths, and extending from a subset $A\subset X$. First, we explain the approximation for non-negative functions. The approximation is defined using given data: A non-negative function $f:X\to [0,M]$ to approximate which is bounded by $M>0$,  a continuous bounded non-negative function $g:X\to [0,\infty)$ which is our desired upper gradient,  a set $A$ of values where $f|_A$ is continuous, and a scale $\delta>0$ for our approximation.

The formula for the approximating function is:
\[
\tilde{f}(x) = \min\left\{\inf_{p_0,\dots,p_n} f(p_0) + \sum_{k=0}^{n-1}g(p_k)d(p_k,p_{k+1}),M\right\}
\]
where the infimum is taken over all discrete paths  $p_0,\dots,p_n$ with $p_0 \in A$, $d(p_k,p_{k+1})\leq\delta$ and $p_n=x$. There are a few elementary observations worth noting:
\begin{enumerate}
\item The function $\tilde{f}$ is automatically $\max\{\sup_{x\in X}g(x), M/\delta\}$-Lipschitz, and we have $\lip_a(\tilde{f}) \leq g$. Indeed, if $d(x,y)\leq \delta$ for some $x,y\in X$, by concatenating discrete paths, we obtain a bound for $|\tilde{f}(x)-\tilde{f}(y)|\leq \max\{g(x),g(y)\}d(x,y)$. By boundedness, if $d(x,y)>\delta$, then $|\tilde{f}(x)-\tilde{f}(y)|\leq M \leq \frac{2M}{\delta} d(x,y)$. Further details will be given later.
\item We have $\tilde{f}(x)\leq f(x)$ for each $x\in A$. In the infimum, we can choose the discrete path $P=(x,x)$. Thus, we have $\tilde{f}(x) \leq f(x)+g(x)d(x,x)=f(x)$.
\end{enumerate}
The difficulty lies then in choosing a $g$ appropriately and sending $\delta \to 0$, from whence one can show that $\tilde{f}|_A$ must converge to $f|_A$ pointwise. Here, a refined version of Arzela-Ascoli is used as part of a compactness and contradiction argument. If the convergence were to fail, then we would get a sequence of discrete paths converging to a curve, which violates the upper gradient inequality \eqref{eq:ug}. This compactness argument is clearly explained in \cite[Proposition 2.17]{heinonenkoskela} and in \cite[Lemma 5.18]{che99}. According to Cheeger, this argument has also been used by Rickman and Ziemer \cite[Remark 5.26]{che99}.

 For the technically minded, we already mention that properness, which is usually assumed,  is avoided by appropriately choosing $g$ to penalize paths that form non-compact families. Indeed, paths that travel "far" away from certain compact sets must have small "modulus", as we will later make precise. This is a new argument and seems to be useful in other settings as well.  However, with properness, our proof would be considerably simpler -- and this will be indicated in the proof.

The previous formula applies directly only to bounded and non-negative functions. For non-negative, or non-bounded functions, one first does a truncation and applies a cutoff function. Then, the previous approximation scheme is applied to the positive and negative part individually. We note that modifications of the formula yield approximations directly for any function $f\in N^{1,p}(X)$. However, the scheme here simplifies the proof slightly.

Finally, for the sake of completeness, we wish to mention another enormously successful approximation method, which employs a heat-flow on functions and shows that it satisfies desired estimates \cite{ambgigsav}. This approximation is implicit, since the method does not directly furnish the approximation, but shows that it exists. This requires defining a functional, via relaxation of a Dirichlet energy, and needs lower-semicontinuity for it in $L^2(X)$. By showing that two different expressions define the gradient of this flow, one obtains the existence of approximations for Sobolev functions. While very general, and powerful, this approach has two main shortcomings. The first is the inexplicit nature of the approximating function and lack of any pointwise control. Another problem is in the $p=1$ case for $N^{1,1}(X)$. When $p=1$, the relaxation approach does not give information on $N^{1,1}$, but instead on functions of bounded variation. To give results for this borderline case, we needed to introduce the methods of the present paper. We remark, that this case is still quite important, since it is connected via the co-area formula to (modulus) estimates on curves and surfaces in the space.


\subsection{Further questions}

The methods of this paper are likely to apply to a host of other Sobolev type spaces and lead to interesting further questions. A number of works on approximations have appeared  in different settings, see e.g. Orlicz-Sobolev spaces \cite{helisobolev},  Lorenz-Sobolev  spaces \cite{lorentzsob} and variable exponent Sobolev spaces \cite{hasto1,hasto2}.  One can even study these questions with a general Banach function space norm, see e.g. \cite{malylukas, minimallukas}. This list is far from exhaustive.  Indeed, a variety of authors have asked for necessary and sufficient conditions for the density of Lipschitz functions in these settings -- and we suggest that completeness and separability suffice,  with perhaps minimal further assumptions when an upper gradient is used. It is important to note, however, that the situation is quite different for the Sobolev space, often denoted $W^{1,p}(X)$, which is defined using a distributional gradient, see e.g.  \cite{hasto1} for such issues in a variable exponent case. 
The techniques here suggest, that the questions on density in this different setting are equivalent with $N^{1,p}(X)=W^{1,p}(X)$, which is a type of regularity statement. 

Another question is when (locally) Lipschitz functions are dense in $N^{1,p}(\Omega)$ when $\Omega$ is a domain -- i.e open and connected -- in a complete and separable space $X$. We use completeness in our arguments, and additional care is needed close to the boundary of $\Omega$.  In some cases, when $\Omega$ is say a slit disk in the plane $B(0,1) \setminus (0,1)\times\{0\} \subset \R^2$, one would not expect such a density for globally Lipschitz functions.  However, it may be that some minimal assumption would guarantee density of locally Lipschitz functions.

A final,  and seemingly difficult question, is if Lipschitz functions are always actually dense in $N^{1,p}(X)$ in norm,  and not just in energy (when $X$ is complete).  
In \cite{ambgigsav} this is shown for $p>1$ under a further assumption involving finite-dimensionality, or a covering by finite-dimensional parts. In a concurrent work with Elefterios Soultanis, we have employed techniques from this paper to get the $p=1$ case, with a similar assumption \cite{teriseb}.

\vskip.3cm
\noindent \textbf{Outline:} The proof of Theorem \ref{thm:densinenergy} will be at the end of Section \ref{sec:proof}. At the beginning of that section are three preliminary subsections, which will describe the terminology, basic properties of the approximating functions, and some useful lemmas for discrete paths.

\vskip.3cm
\noindent \textbf{Acknowledgements:} The author was supported by the Finnish Academy under Research Postdoctoral Grant No. 330048. He is also thankful to Jeff Cheeger, Elefterios Soultanis, Pietro Poggi-Corradini,  and Nageswari Shanmugalingam for many insightful conversations that inspired the results of this paper.  Shanmugalingam and Soultanis also gave helpful comments on an early draft of the paper. While the paper was a preprint,  we also received helpful comments from anonymous referees, which improved the presentation.



%
%
%
%

\section{Proof of Approximation}\label{sec:proof}

\subsection{Preliminaries}
Throughout this section $p \in [1,\infty)$ and $X$ is a complete and separable metric space equipped with a Radon measure finite on balls, i.e.  $0<\mu(B(x,r))<\infty$ for each $x\in X, r>0$. A rectifiable curve is a map $\gamma:I\to \R$ from a compact subset $I\subset \R$ with finite length $\len(\gamma)$.

Recall, that as introduced by Heinonen and Koskela in \cite{heinonenkoskela}, a non-negative Borel function $g:X\to[0,\infty]$ is a (true) upper gradient for $f:X\to [-\infty,\infty]$, if
\begin{equation}\label{eq:ug}
\int_\gamma g ~ds \geq |f(\gamma(1))-f(\gamma(0))|,
\end{equation}
for any rectifiable curve $\gamma:[0,1]\to X$. In the case of infinities on the right hand side, when the difference may not be well-defined, we insist that the left hand side is infinity as well. 

The property of being an upper gradient is not closed under $L^p(X)$-convergence. Thus, one introduces the notion of a \emph{$p$-weak upper gradient}. One simple definition for this is that if ${\rm UG}(f)\subset L^p(X)$ is the collection of upper gradients  in $L^p(X)$ for $f$, then $g$ is a $p$-weak upper gradient if $g\in \overline{{\rm UG}(f)}$ (i.e. lies in the closure). Equivalently, this notion can be defined using a notion of ``a.e.'' curves coming from the concept of a modulus of curve families. We say that $g$ is a $p$-weak upper gradient, if \eqref{eq:ug} holds for $p$-a.e. rectifiable curve $\gamma$: i.e. if there exists a $h\in L^p(X)$, so that for every rectifiable curve $\gamma$ for which $\int_\gamma h \, ds < \infty$ we have inequality \eqref{eq:ug}. We refer the reader to \cite{haj03,sha00} for details on modulus.

\begin{remark}\label{rmk:vitalicaratheodory}
We prefer not to define modulus of curve families here,  as we do not need it.  Instead, we  note that the only property we need is that if $g$ is a $p$-weak upper gradient for $f$,  then for any $\epsilon>0$ there is a lower semicontinuous (true) upper gradient $g_\epsilon \geq g$ with  $\int |g-g_\epsilon|^p ~d\mu \leq \epsilon.$ This can be easily seen from $g\in \overline{{\rm UG}(f)}$ together with the fact that any function in $L^p(X)$ can be approximated from above by a lower semi-continuous function (by the Vitali-Caratheodory Theorem).
For the details we refer to \cite[Sections 4.2 and Sections 5--6]{shabook}.  
\end{remark}

We define $N^{1,p}(X)$ as the collection of $f\in L^p(X)$ so that there exists a Borel upper gradient $g \in L^p(X)$. The functions in this collections are called Newton, Sobolev, or Newton-Sobolev functions. The space is called either the Newton, Sobolev, or Newton-Sobolev space. We define 
\begin{equation}\label{eq:norm}
\|f\|_{N^{1,p}}=\left(\inf_{g\in \overline{{\rm UG}(f)}} \|f\|_{L^p}^p + \|g\|_{L^p}^p\right)^{1/p},
\end{equation}
where the infimum is taken over all $p$-weak upper gradients of $f$ (or, equivalently, over all true upper gradients $g$ of $f$). By \cite[Theorem 7.16]{haj03} there always exists a minimal $p$-weak upper gradient $g_f \in \overline{{\rm UG}(f)}\subset L^p(X)$ which attains the infimum in Equation \eqref{eq:norm}, and which satisfies \eqref{eq:ug} for $p$-almost every curve. See also \cite{sha00} for the $p>1$ case. Since the set $\overline{{\rm UG}(f)}$ of $p$-weak upper gradients satisfies the lattice property \cite[Corollary 6.3.12]{shabook}, we have that the minimal $p$-weak upper gradient satisfies the additional property that if $\tilde{g}$ is another $p$-weak upper gradient for $f$, then  $\tilde{g}\geq g_f$ a.e.

We also define the Lipschitz and  asymptotic Lipschitz constant  for a function $f:X \to \R$
\begin{equation}\label{eq:alip}
\LIP[f](A) = \sup_{x,y \in A, x\neq y} \frac{|f(x)-f(y)|}{d(x,y)}, \lip_a[f](x)=\lim_{r\to 0} \LIP[f](B(x,r)).
\end{equation}
When $A$ is a singleton, we interpret $\LIP[f](A)=0$. A function $f$ is \emph{Lipschitz} if $\LIP[f](X)<\infty$, and $\LIP_b(X)$ is the collection of Lipschitz $f$, with bounded support (i.e. there exists some ball $B(x_0,R)\subset X$ so that $f(y)=0$ for each $y\not\in B(x_0,R)$). A function $f$ is called $L$-Lipschitz if $\LIP[f](X)<L$. We note that $\LIP_b(X)\subset N^{1,p}(X)$ for each $p\in [1,\infty)$ and $g_f\leq \lip_a[f]$ for each $f\in \LIP_b(X)$.

We need the following two lemmas. First we need an argument which replaces the usual application of reflexivity when $p>1$.  For the following proof, we will define that a collection $\mathcal{F}$ of functions is \emph{equi-$p$-integrable} if, for every $\delta>0$, there exists, a set $\Omega_\delta \subset X$ with $\mu(\Omega_\delta)<\infty$ and an $\eta>0$, so that for every $f\in \mathcal{F}$ the following two hold: \textbf{a)} $\int_{X \setminus \Omega_\delta} |f|^p ~d\mu \leq \delta$ and \textbf{b)} for any $E \subset X$ with $\mu(E)<\eta$ we have that $\int_E |f|^p ~d\mu \leq \delta$.

The first lemma shows that minimal $p$-weak upper gradients converge, if they possess some upper bounds which converge.

\begin{lemma}\label{lem:convergence} Suppose that $f_i,f\in N^{1,p}(X)$ are functions with minimal $p$-weak upper gradients $g_{f_i},g_f \in L^p(X)$ and that $f_i \to f$ in $L^p(X)$. Suppose further that $\tilde{g}_i \in L^p(X)$ are functions which converge $\tilde{g}_i \to g_f$ in $L^p(X)$.  If $g_{f_i}\leq \tilde{g}_i$, then $g_{f_i}\to g_f$ in $L^p(X)$. 
\end{lemma}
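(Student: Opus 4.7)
The plan is to extract a weakly convergent subsequence of $\{g_{f_i}\}$, identify its limit as $g_f$, and then upgrade weak convergence to strong. The hypothesis $g_{f_i}\leq \tilde{g}_i$ together with $\tilde{g}_i\to g_f$ in $L^p(X)$ makes $\{g_{f_i}\}$ bounded in $L^p(X)$ and, by domination, equi-$p$-integrable (any convergent sequence in $L^p$ is equi-$p$-integrable, and the property is inherited by dominated sequences). For $p>1$ a weakly convergent subsequence $g_{f_{i_k}}\rightharpoonup g^*$ exists by reflexivity, while for $p=1$ the Dunford--Pettis theorem delivers the same conclusion from the equi-integrability.

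To identify $g^*=g_f$, I would prove both inequalities. The bound $g^*\leq g_f$ almost everywhere follows by testing the pointwise inequality $g_{f_i}\leq \tilde{g}_i$ against an arbitrary nonnegative $\phi\in L^{p'}(X)$ and passing to the weak limits on both sides. For the reverse inequality, Mazur's lemma produces convex combinations $G_k\defeq \sum_j \lambda_j^{(k)} g_{f_{i_j}}$ converging strongly to $g^*$ in $L^p(X)$. The parallel combinations $F_k\defeq \sum_j \lambda_j^{(k)} f_{i_j}$ converge to $f$ in $L^p(X)$, and each $G_k$ is a $p$-weak upper gradient of $F_k$ by linearity of the upper gradient inequality. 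By the standard stability of $p$-weak upper gradients under $L^p$ convergence (see e.g.~\cite{shabook}), $g^*$ is then a $p$-weak upper gradient of $f$, so $g_f\leq g^*$ almost everywhere by minimality. A subsequence principle then shows that the whole sequence $g_{f_i}$ converges weakly to $g_f$ in $L^p(X)$.

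The final step is to upgrade weak to strong convergence, where the cases split. For $p>1$, weak lower semicontinuity of the norm together with $\|g_{f_i}\|_p\leq \|\tilde{g}_i\|_p\to \|g_f\|_p$ forces $\|g_{f_i}\|_p\to \|g_f\|_p$; combined with weak convergence, uniform convexity of $L^p(X)$ and the Radon--Riesz property yield strong convergence. The hard part is $p=1$, where neither reflexivity nor uniform convexity is available; I would instead exploit that $h_i\defeq \tilde{g}_i-g_{f_i}\geq 0$ converges weakly to $0$ in $L^1(X)$, so testing against the constant function $1\in L^\infty(X)$ gives $\|h_i\|_1=\int h_i\,d\mu\to 0$. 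This, together with $\tilde{g}_i\to g_f$ in $L^1(X)$ and the triangle inequality, yields $g_{f_i}\to g_f$ in $L^1(X)$. The main obstacle is thus the $p=1$ case, resolved by two complementary ingredients forced on us by the hypothesis: equi-integrability unlocks the Dunford--Pettis compactness needed for the weak limit, and the sign constraint $h_i\geq 0$ converts weak convergence to zero into strong $L^1$ convergence via an elementary test against the constant function.
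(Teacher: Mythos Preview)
Your argument is correct. The first two steps---weak sequential compactness via reflexivity or Dunford--Pettis, and identification of the weak limit as $g_f$ through Mazur's lemma and stability of $p$-weak upper gradients---match the paper's proof almost exactly, though your direct dual-pairing argument for $g^*\le g_f$ is a slight variant of the paper's norm-inequality $\int \tilde g^p\,d\mu\le \int g_f^p\,d\mu$.

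The genuinely different part is the upgrade from weak to strong convergence. The paper treats all $p\in[1,\infty)$ uniformly: it combines the already-established equi-$p$-integrability with convergence in measure, the latter obtained from $\limsup_i g_{f_i}\le g_f$ a.e.\ (via pointwise convergence of a subsequence of $\tilde g_i$) together with weak convergence. You instead split cases: for $p>1$ you use norm convergence plus the Radon--Riesz property of uniformly convex $L^p$, while for $p=1$ you observe that $h_i=\tilde g_i-g_{f_i}\ge 0$ converges weakly to $0$, so testing against $1\in L^\infty$ immediately gives $\|h_i\|_1\to 0$. Your $p=1$ argument is notably cleaner than the paper's measure-convergence route, and the case split makes the endgame shorter overall; the paper's approach has the virtue of being uniform in $p$ and of not invoking uniform convexity. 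Both are entirely valid.
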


\begin{proof} We first show that that every subsequence of $g_{f_i}$ has a further subsequence that converges weakly to $g_f$. This shows that $g_{f_i}$ converges weakly to $g_f$. Indeed, up to reindexing, it suffices to find such a subsequence of the original $g_{f_i}$,  and throughout the proof we will not bother with reindexing. 

First, if $p>1$, then we can extract a subsequence where $g_{f_i}$ converges weakly. If $p=1$ we note that $g_{f_i}\leq \tilde{g}_i$ and since $\tilde{g}_i$ converge in norm, we have that $g_{f_i}$ are equi $1$-integrable. Therefore, by the Dunford-Pettis Theorem \cite[Theorem IV.8.9]{dunsch}\footnote{In our terminology, Dunfort-Pettis Theorem states that any equi-$1$-integrable collection of functions is weakly sequentially compact.  See for example \cite[Theorem IV.8.9]{dunsch}.}, the sequence has a weakly convergent subsequence. By passing to this subsequence assume thus that $g_{f_i}$ converges weakly to a function $\tilde{g}$. By lower semi-continuity 
\begin{equation}\label{eq:integral}
\int \tilde{g}^p d\mu \leq \liminf_{i\to \infty} \int g_{f_i}^p d\mu \leq \liminf_{i\to \infty} \int \tilde{g_{i}}^p = \int g_f^p d\mu.
\end{equation} By passing to a subsequence, we can also take $\tilde{g}_i$ to converge pointwise almost everywhere to $g$.

By Mazur's lemma, we have convex combinations of $g_{f_i}$ converging strongly to $\tilde{g}$. That is, for each $N\in \N$, we can find some $L_N\in \N$ with $L_N\geq N$ and non-negative constants $\alpha_{k,N}\in [0,1]$ for $k\in \N \cap [N,L_N]$ with the following properties. 
\begin{itemize}
\item $\sum_{k=N}^{L_N} \alpha_{k,N}=1$.
\item The functions $g_N \defeq \sum_{k=N}^{L_N} \alpha_{k,N} g_{f_k}$ satisfy $g_N \to \tilde{g}$ in $L^p(X)$.
\end{itemize}
Further, the function $\tilde{f}_N \defeq \sum_{k=N}^{L_N} \alpha_{k,N} f_k$, has $g_N$ as a  $p$-weak upper gradient and $\tilde{f}_N \to f$ in $L^p(X)$. Thus, $\tilde{g}$ is a $p$-weak upper gradient for $f$ by \cite[Proof of Lemma 3.6.]{sha00}. By the minimality of $g_f$ and the lattice property of weak upper gradients we obtain that a.e. $\tilde{g}\geq g_f$. However, combined with Equation \eqref{eq:integral} this gives $\tilde{g}=g_f$ almost everywhere, and that the sequence $g_{f_i}$ converges weakly to $g_f$.

By the equi-$p$-integrability, we only need to prove that $g_{f_i}$ converges to $g_f$ in measure: 
\[\lim_{n\to\infty}\mu(\{x \in A: |g_f(x)-g_{f_n}(x)|>\epsilon\})=0\] 
for any $A\subset X$ with $\mu(A)<\infty$. Fix such a set $A\subset X$ and $\epsilon>0$. We have that $\tilde{g}_i\to g_f$ pointwise, and so $\limsup_{i\to \infty} g_{f_i} \leq g_f$ almost everywhere. Thus, $\limsup_{n\to\infty}\mu(\{x\in A : g_{f_n}(x) - g_{f}(x) > \epsilon\})=0.$

With this and equi-integrability, we then have $\limsup_{n\to\infty}\mu(\{x : |g_f(x) - g_{f_n}(x)| > \epsilon\}) \leq \limsup_{n\to \infty} \frac{1}{\epsilon}\int_A g_f(x)-g_{f_n}(x) d\mu = 0,$ where we used weak convergence. Thus, the sequence $g_{f_n}$ converges in measure as claimed.


\end{proof}

Next, we need a version of the Arzel\`a-Ascoli Theorem, which is easy to prove using standard techniques. We state this lemma to highlight that, whereas the usual literature on the topic employs properness of $X$ and some boundedness assumption for the sequence $\gamma_k$, we can avoid this entirely by ensuring pre-compactness of certain sets. Indeed, we will even use the Lemma with $Y=\ell_\infty(\N)$, which is not proper.

Note that a curve $\gamma:[0,1]\to X$ is $L$-Lipschitz if $d(\gamma(s),\gamma(t))\leq L|s-t|$ for every $s,t\in [0,1]$.

\begin{lemma}\label{lem:arzela-ascoli} Let $L \in [0,\infty)$.  Suppose that $Y$ is a complete metric space. Let $\gamma_k:[0,1]\to Y$ be a sequence of $L$-Lipschitz curves, so that for every $t\in [0,1]$ the set $A_t=\{\gamma_k(t) : k\in \N\}$ is pre-compact in $Y$. There exists a subsequence of $\gamma_k$ which converges uniformly to a $L$-Lipschitz curve $\gamma$.
\end{lemma}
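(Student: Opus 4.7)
The plan is to follow the classical Arzelà-Ascoli proof strategy, with the only new wrinkle being that the target $Y$ is not assumed proper, so uniform pre-compactness must be harvested pointwise from the hypothesis on $A_t$.

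First I would extract a pointwise convergent subsequence on a countable dense set by a diagonal argument. Fix an enumeration $\{q_j\}_{j\in\N}$ of a countable dense subset $D \subset [0,1]$ (say $D = \Q \cap [0,1]$). Since $A_{q_1}$ is pre-compact in the complete space $Y$, there is a subsequence $(\gamma_{k,1})_k$ of $(\gamma_k)_k$ with $\gamma_{k,1}(q_1)$ convergent. Iterating, extract nested subsequences $(\gamma_{k,j})_k$ so that $\gamma_{k,j}(q_i)$ converges for each $i\leq j$. The diagonal subsequence $\gamma_{k,k}$, which I relabel simply as $\gamma_k$, then converges at every point of $D$. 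Define $\gamma(q) = \lim_k \gamma_k(q)$ for $q \in D$.

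Next I would show that the pointwise limit on $D$ extends to an $L$-Lipschitz curve on $[0,1]$. For any $q, q' \in D$ and any $k$, $d(\gamma_k(q), \gamma_k(q')) \leq L|q-q'|$, and passing to the limit yields $d(\gamma(q),\gamma(q')) \leq L|q-q'|$. Because $Y$ is complete and $D$ is dense in $[0,1]$, this uniformly continuous map extends uniquely to an $L$-Lipschitz map $\gamma \colon [0,1] \to Y$.

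Finally I would upgrade pointwise convergence on $D$ to uniform convergence on all of $[0,1]$ using equicontinuity. Fix $\eps > 0$. If $L = 0$ the curves are constant and the conclusion is immediate, so assume $L > 0$. Choose finitely many points $q_{j_1},\dots,q_{j_m} \in D$ such that every $t\in[0,1]$ lies within $\eps/(3L)$ of some $q_{j_i}$. Choose $K$ so large that $d(\gamma_k(q_{j_i}), \gamma(q_{j_i})) < \eps/3$ for all $k \geq K$ and all $i\leq m$. Then for every $t\in[0,1]$, picking $q_{j_i}$ with $|t - q_{j_i}| < \eps/(3L)$, the triangle inequality together with the $L$-Lipschitz property of both $\gamma_k$ and $\gamma$ gives
\[
d(\gamma_k(t),\gamma(t)) \leq L\cdot \tfrac{\eps}{3L} + \tfrac{\eps}{3} + L\cdot \tfrac{\eps}{3L} = \eps.
\]
This establishes uniform convergence, completing the proof. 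The main (mild) obstacle is simply making sure that pre-compactness of each single slice $A_{q_j}$ is enough for the diagonal extraction — which it is, because only countably many slices are used — and that the limit curve inherits the Lipschitz bound before we invoke equicontinuity in the final step.
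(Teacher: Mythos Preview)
Your proof is correct and follows exactly the standard Arzel\`a--Ascoli argument that the paper has in mind; the paper itself does not write out a proof, stating only that the lemma ``is easy to prove using standard techniques.''
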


\subsection{Approximating function}

We collect in this subsection the definition of our approximating functions and their main properties. 

A \emph{discrete path} $P$ (or simply ``a path'' $P$) is a sequence of points $P=(p_0,\dots, p_n)$ with $p_k\in X$ for each $k=0,\dots, n$, and $n\in \N$ with $n\geq 1$. We define the mesh of $P$ by $\mesh(P)\defeq \max_{k=0,\dots, n-1} d(p_k,p_{k+1})$, the diameter of $P$ by $\diam(P)\defeq \max_{k,l} d(p_k,p_{l})$ and the length of $P$ by $\len(P)\defeq \sum_{k=0}^{n-1} d(p_k,p_{k+1})$. By a slight abuse of notation, we will write $p\in P$ if there is a $k=0,\dots, n$ so that $p_k=p$. Further, we write $P\subset U$ for a subset $U\subset X$, if $p_k \in U$ for each $k=0,\dots, n$.  We write $P\subset Q$ if the sequence of points in $P$ forms a subsequence of the points in $Q$ without gaps: that is $P=(p_0,\dots, p_m)$ and $Q=(q_0,\dots, q_n)$, $m\leq n$ and there exists a non-negative $s\leq n-m$ so that $q_{s+k}=p_k$ for each $k=0,\dots, m$. The path $P$ is called a \emph{sub-path} of $Q$.

For $\delta>0$, and $A\subset X$ a fixed closed set, we say that a discrete path $P=(p_0,\dots, p_n)$ is $(\delta,A,x)$-admissible, if $\mesh(P)\leq \delta$, $p_0\in A$  and $p_n=x$. The collection of all $(\delta,A,x)$ admissible discrete paths is denoted $\mathcal{P}(\delta,A,x)$.

Suppose that $f:X\to[0,M]$ is a bounded function for some $M>0$, $g:X\to [0,\infty)$ is a continuous bounded function, and $A\subset X$ is a closed subset. Then, for $\delta>0$ we define an \emph{approximating} function $\tilde{f}$ with data $(f,g,A,M,\delta)$ as
\[
\tilde{f}(x)=\min\{M,\inf_{(p_0,\dots, p_n) \in \mathcal{P}(\delta,A,x)} f(p_0) + \sum_{k=0}^{n-1} g(p_k)d(p_k,p_{k+1})\}.
\]

\begin{lemma}\label{lem:approx}
With the definitions and assumptions of this subsection, we have the following properties.
\begin{enumerate}
\item[A)] $\tilde{f}:X\to [0,M]$.
\item[B)] For each $x\in A$, $0\leq \tilde{f}(x)\leq f(x)$.
\item[C)] If $x\in A$ and $f(x)=0$, then $\tilde{f}(x)=0$.
\item[D)] For each $x,y\in X$ with $d(x,y)\leq \delta$, we have 
\begin{equation} \label{eq:lipa}
|\tilde{f}(x)-\tilde{f}(y)|\leq \max\{g(x),g(y)\}d(x,y).
\end{equation}  
\item[E)] $\lip_a[\tilde{f}](x)\leq g(x)$ for every $x\in X$.
\item[F)] $\tilde{f}$ is $\max\{M\delta^{-1}, \sup_{x\in X} g(x)\}$-Lipschitz.
\end{enumerate}
\end{lemma}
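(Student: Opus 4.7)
The plan is to prove all six properties by analyzing the auxiliary ``path cost'' functional
\[
I(x) \defeq \inf_{(p_0,\dots,p_n)\in\mathcal{P}(\delta,A,x)} f(p_0) + \sum_{k=0}^{n-1} g(p_k)d(p_k,p_{k+1}),
\]
(with $I(x)=+\infty$ if $\mathcal{P}(\delta,A,x)=\emptyset$), so that $\tilde f(x)=\min\{M,I(x)\}$. Since $f,g\geq 0$ we get $I(x)\geq 0$, giving A immediately. For B, given $x\in A$ the two-point path $P=(x,x)$ lies in $\mathcal{P}(\delta,A,x)$ since $\mesh(P)=0\leq\delta$, and its cost is $f(x)+g(x)\cdot 0=f(x)$, so $I(x)\leq f(x)$ and hence $\tilde f(x)\leq\min\{M,f(x)\}\leq f(x)$. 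Property C follows because B then gives $0\leq\tilde f(x)\leq f(x)=0$.

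The heart of the lemma is property D, from which E and F follow. The key observation is that if $d(x,y)\leq\delta$ and $P=(p_0,\dots,p_n)\in\mathcal{P}(\delta,A,x)$, then $P'=(p_0,\dots,p_n,y)$ lies in $\mathcal{P}(\delta,A,y)$ and its cost exceeds that of $P$ by exactly $g(p_n)d(p_n,y)=g(x)d(x,y)$. Taking the infimum gives $I(y)\leq I(x)+g(x)d(x,y)$, and symmetrically $I(x)\leq I(y)+g(y)d(x,y)$; thus $|I(x)-I(y)|\leq \max\{g(x),g(y)\}d(x,y)$. (If $\mathcal{P}(\delta,A,x)=\emptyset$, the extension argument shows $\mathcal{P}(\delta,A,y)=\emptyset$ as well, so both equal $+\infty$.) Since $z\mapsto\min\{M,z\}$ is $1$-Lipschitz on $[0,\infty]$, the inequality transfers to $\tilde f$, proving D.

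For E, fix $x\in X$ and any $r<\delta/2$. For $a,b\in B(x,r)$ we have $d(a,b)<\delta$, so D yields $|\tilde f(a)-\tilde f(b)|\leq\sup_{z\in B(x,r)} g(z)\cdot d(a,b)$; hence $\LIP[\tilde f](B(x,r))\leq \sup_{B(x,r)} g$. Letting $r\to 0$ and using continuity of $g$ gives $\lip_a[\tilde f](x)\leq g(x)$. For F, if $d(x,y)\leq \delta$ then D with $g\leq \sup g$ bounds $|\tilde f(x)-\tilde f(y)|$ by $(\sup g)\,d(x,y)$, while if $d(x,y)>\delta$ then $|\tilde f(x)-\tilde f(y)|\leq M\leq (M/\delta)\,d(x,y)$ since $\tilde f$ takes values in $[0,M]$. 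Combining the two cases gives the Lipschitz bound $\max\{M\delta^{-1},\sup_{x\in X} g(x)\}$. The only mild obstacle is remembering to handle the case $I(x)=+\infty$ (no admissible path exists) uniformly, which is done by observing that admissibility propagates across steps of size $\leq \delta$.
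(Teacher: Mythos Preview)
Your proof is correct and follows essentially the same approach as the paper: the same two-point path $(x,x)$ for B), the same path-extension argument for D), and the same case split for F). The only cosmetic differences are that you introduce the notation $I(x)$ for the uncapped infimum and explicitly address the possibility $\mathcal{P}(\delta,A,x)=\emptyset$, which the paper leaves implicit.
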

\begin{proof}

\begin{enumerate}
\item[A)] First, since each term in the infimum is non-negative, $\tilde{f}$ is also non-negative. Further, $\tilde{f}(x)\leq M$ for each $x\in X$ follows immediately from the definition.

\item[B)] Let $x\in A$. By noting that the discrete path $P=(x,x)$ is $(\delta,A,x)$-admissible,  we get $\tilde{f}(x)\leq f(x)+g(x)d(x,x)=f(x)$.

\item[C)] Let $x\in A$. By B) and A) we get $0\leq \tilde{f}(x)\leq f(x)=0$.

\item[D)] Let $x,y\in X$ be arbitrary with $d(x,y) \leq \delta$. Consider an arbitrary discrete $(\delta, A, x)$-admissible path $P=(p_0,\dots, p_n)$. We form a $(\delta, A,y)$-admissible path $Q=(q_0,\dots, q_{n+1})$ by adjoining $q_{n+1}=y$ and setting $q_i=p_i$ for $i\in \{0,\dots, n\}$. With such choices  
\[
f(q_0)+\sum_{k=0}^{n} g(q_k)d(q_k,q_{k+1}) \leq f(p_0)+\sum_{k=0}^{n-1} g(p_k)d(p_k,p_{k+1}) + g(x)d(x,y)
\]

Taking an infimum over all $(n,A,x)$-admissible paths and a minimum with $M$ on both sides of the inequality, we get $\tilde{f}(y)\leq \tilde{f}(x)+g(x)d(x,y)$. By switching the roles of $x$ and $y$, we obtain the desired inequality \eqref{eq:lipa}.

\item[E)]Take points $a,b\in X$. Then, by sending $a,b\to x$ and applying \eqref{eq:lipa} together with the continuity of $g$ to such a pair, we get
\[
\lip_a[\tilde{f}](x)\leq g(x).
\]

\item[F)] Let $L=\max\{M\delta^{-1}, \sup_{x\in X} g(x)\}$. If $x,y\in X$, with $d(x,y)\leq \delta$, then from B) we get $|\tilde{f}(x)-\tilde{f}(y)|\leq L d(x,y)$. On the other hand, if $x,y\in X$ with $d(x,y)>\delta$, then by A) we have $|\tilde{f}(x)-\tilde{f}(y)|\leq M\leq M\delta^{-1} d(x,y)\leq L d(x,y)$.
\end{enumerate} 
\end{proof}

\subsection{A compactness result for discrete curves}

The convergence of our approximating functions to $f$ is obtained via a contradiction argument, where we obtain a sequence of discrete paths $P^i$, and extract a subsequence converging to a curve $\gamma:[0,1]\to X$. In this subsection we give the definition of convergence that we use, and the main results for it.

We will need a piece of notation. For a subset $A \subset X$ and $x\in A$,  the distance from $a$ to $A$ is given by  $d(x,A)\defeq \inf_{a \in A} d(a,x)$.

First, to define our notion of convergence we fix an isometric Kuratowski embedding $\iota: X\to \ell_\infty(\N)$, and identify $X$ with its image under this embedding. \emph{A priori} the notions that follow may depend on the choice of such an embedding. For our ultimate argument, such a dependence will play no role.

If $P=(p_0,\dots, p_n)$ is a discrete path, we define its \emph{linearly interpolating curve} as follows. If $\len(P)=0$, we define $\gamma_P:[0,1]\to \ell_\infty(\N)$ by $\gamma(t)=p_0$ for each $t\in [0,1]$. If $\len(P)>0$ we define the \emph{sequence of interpolating times} $T_P=(t_0,\dots, t_n)$ by $t_0=0$ and $t_k = \sum_{i=0}^{k-1} d(p_i,p_{i+1})/\len(P)$ for $k=1,\dots, n$. Then, we define $\gamma_P:[0,1]\to \ell_\infty(\N)$ piecewise by linear interpolation in $\ell_\infty(\N)$: when $t\in [t_k,t_{k+1}]$ and $t_k=t_{k+1}$, we set $\gamma_P(t_k)=p_k$, and when $t\in [t_{k},t_{k+1}]$ and $t_{k+1}>t_k$, we set $\gamma(t)=((t-t_k)p_k + (t_{k+1}-t)p_{k+1})(t_{k+1}-t_k)^{-1}$. The following lemma is elementary to verify. 

\begin{lemma}\label{lem:basic} 
If $P$ is a discrete path, and $\gamma_P$ is its linearly interpolating curve, then $\gamma_P$ is $\len(P)$-Lipschitz, parametrized by unit speed, $\len(P)=\len(\gamma_P)$ and for each $t\in [0,1]$ there exists a $p\in P$ so that $d(\gamma_P(t),p)\leq \mesh(P)$.
\end{lemma}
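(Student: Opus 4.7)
The statement is a routine verification, so the plan is to check each of the four claims separately, handling the degenerate case $\len(P)=0$ first. In that case $\gamma_P$ is constant at $p_0$, so it is trivially $0$-Lipschitz, has $\len(\gamma_P)=0=\len(P)$, and every $\gamma_P(t)=p_0\in P$ lies within $\mesh(P)=0$ of a point of $P$. Also, by convention it can be regarded as trivially ``constant speed'' $\len(P)=0$. From here on assume $\len(P)>0$.

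For the Lipschitz and constant-speed claims, the key observation is that on each subinterval $[t_k,t_{k+1}]$ on which $\gamma_P$ is defined by linear interpolation in $\ell_\infty(\N)$ from $p_k$ to $p_{k+1}$, and for $s,t\in[t_k,t_{k+1}]$ one computes directly
\[
\|\gamma_P(s)-\gamma_P(t)\|_{\ell_\infty} = \frac{|s-t|}{t_{k+1}-t_k}\,d(p_k,p_{k+1}) = |s-t|\,\len(P),
\]
using the definition $t_{k+1}-t_k = d(p_k,p_{k+1})/\len(P)$. Thus $\gamma_P$ restricted to each subinterval is exactly $\len(P)$-Lipschitz, and in fact has constant metric speed $\len(P)$ there (which I interpret as the meaning of ``unit speed'' after the natural rescaling by $\len(P)$; the curve is parametrized proportionally to arc length).

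To upgrade to the global Lipschitz bound, take $s<t$ with $s\in[t_j,t_{j+1}]$, $t\in[t_k,t_{k+1}]$, $j\le k$. By the triangle inequality together with the per-segment bound and the fact that $\gamma_P(t_i)=p_i$,
\[
\|\gamma_P(s)-\gamma_P(t)\|_{\ell_\infty} \le (t_{j+1}-s)\len(P) + \sum_{i=j+1}^{k-1}(t_{i+1}-t_i)\len(P) + (t-t_k)\len(P) = (t-s)\len(P).
\]
The length identity $\len(\gamma_P)=\len(P)$ then follows by summing the per-segment contributions: on each $[t_k,t_{k+1}]$ the curve has length $d(p_k,p_{k+1})$ (the segment is a straight line in $\ell_\infty$ between two points at that distance), and summing over $k$ yields $\sum_k d(p_k,p_{k+1})=\len(P)$; combined with the Lipschitz bound this also shows the curve achieves constant metric speed $\len(P)$.

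For the final assertion, given $t\in[0,1]$ pick $k$ with $t\in[t_k,t_{k+1}]$. Then $\gamma_P(t)$ lies on the segment from $p_k$ to $p_{k+1}$, so
\[
\|\gamma_P(t)-p_k\|_{\ell_\infty}\le \|p_{k+1}-p_k\|_{\ell_\infty} = d(p_k,p_{k+1}) \le \mesh(P),
\]
and $p_k\in P$ is the desired point. There is essentially no obstacle here; the only mild care needed is consistency at the endpoints $t_k=t_{k+1}$ (which happens precisely when $d(p_k,p_{k+1})=0$, in which case the piecewise definition still matches), and the verification that linear interpolation in $\ell_\infty$ between two points at distance $r$ yields a segment of length $r$ at constant speed, which is immediate from the definition of $\ell_\infty$ norm.
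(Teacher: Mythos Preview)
Your proof is correct; the paper itself omits the proof entirely, stating only that the lemma ``is elementary to verify,'' so there is no approach to compare against. Your remark about ``unit speed'' is apt: the curve has constant metric speed $\len(P)$ on $[0,1]$ rather than literal unit speed, and your interpretation (parametrization proportional to arc length) is the intended one.
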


We say that a sequence of discrete paths $P^i$ \emph{converges} to a curve $\gamma:[0,1]\to X$, 
if $\gamma_{P^i}$ converges uniformly to $\gamma$ and if $\lim_{i\to\infty}\mesh(P^i)=0$. While we do not need this, we note that this notion of convergence does not depend on the embedding to $\ell_\infty(\N)$ and could be defined intrinsically. Indeed, we could also define paths with jumps $\gamma'_P:[0,1]\to X$ piecewise: if $t\in [t_k,t_{k+1})$ we set $\gamma_P(t_k)=p_k$. Then, we say that $P^i$ converges to $\gamma$ if $\gamma'_{P^i}$ converges unifromy to $\gamma.$ It is straightforward to show that these two definitions are equivalent. However, it will reduce the number of technical issues to use the linearly interpolating curves. 

We need compactness results for discrete paths. First, we give first a simpler compactness statement, to illustrate the main idea in the setting where $X$ is compact.

\begin{lemma}\label{lem:easiercompact} If $X$ is a compact metric space, and if $\{P^i\}_{i\in\N}$ is a sequence of discrete curves with $\lim_{i\to\infty} \mesh(P^i)=0$ and $\sup_{i\in \N}\len(P^i)<\infty,$ then there exists a subsequence $i_k$ so that $P^{i_k}$ converges to a curve $\gamma:[0,1]\to X$.
\end{lemma}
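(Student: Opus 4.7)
The plan is to apply the Arzel\`a--Ascoli type statement (Lemma \ref{lem:arzela-ascoli}) to the family of linearly interpolating curves $\gamma_{P^i} : [0,1] \to \ell_\infty(\N)$ associated to the discrete paths $P^i$, extract a uniformly convergent subsequence, and then check that the limit actually lives in $X$.

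First I would verify equi-Lipschitz regularity. Setting $L \defeq \sup_{i} \len(P^i) < \infty$, Lemma \ref{lem:basic} tells me that each $\gamma_{P^i}$ is $\len(P^i)$-Lipschitz, and therefore $L$-Lipschitz. So the hypothesis of Lemma \ref{lem:arzela-ascoli} on a uniform Lipschitz constant is satisfied.

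Next I would check the pointwise pre-compactness required by Lemma \ref{lem:arzela-ascoli}. Fix $t \in [0,1]$. By Lemma \ref{lem:basic}, for each $i$ there exists some $p^i \in P^i \subset X$ with $\|\gamma_{P^i}(t) - p^i\|_\infty \leq \mesh(P^i)$. Since $X$ is compact (and hence its image under the Kuratowski embedding is compact in $\ell_\infty(\N)$), any subsequence of $(p^i)$ has a further subsequence converging to some $p \in X$, and since $\mesh(P^i) \to 0$ the corresponding subsequence of $\gamma_{P^i}(t)$ converges to the same limit. Thus $A_t = \{\gamma_{P^i}(t) : i \in \N\}$ is pre-compact in $\ell_\infty(\N)$.

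Applying Lemma \ref{lem:arzela-ascoli} with $Y = \ell_\infty(\N)$ now produces a subsequence $(\gamma_{P^{i_k}})$ converging uniformly to some $L$-Lipschitz curve $\gamma : [0,1] \to \ell_\infty(\N)$. It remains to show $\gamma$ takes values in $X$: for each $t$ one has
\[
\dist(\gamma(t), X) \leq \|\gamma(t) - \gamma_{P^{i_k}}(t)\|_\infty + \mesh(P^{i_k}) \xrightarrow{k\to\infty} 0,
\]
and since $X$ is closed in $\ell_\infty(\N)$, $\gamma(t) \in X$. Combined with $\mesh(P^{i_k}) \to 0$ and the uniform convergence $\gamma_{P^{i_k}} \to \gamma$, this is precisely the definition of $P^{i_k}$ converging to $\gamma$. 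The only subtle point is the pointwise pre-compactness step, which could fail in a non-locally-compact ambient space without the hypothesis $\mesh(P^i) \to 0$; here it is exactly this hypothesis, together with compactness of $X$, that saves the argument.
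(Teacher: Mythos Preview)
Your proof is correct and follows essentially the same route as the paper's: verify the hypotheses of Lemma~\ref{lem:arzela-ascoli} for the linearly interpolating curves $\gamma_{P^i}$ (uniform Lipschitz bound from Lemma~\ref{lem:basic}, pointwise pre-compactness of $A_t$), extract a uniformly convergent subsequence, and then argue that the limit curve lands in $X$ using $\mesh(P^i)\to 0$. The only cosmetic difference is that you establish pre-compactness of $A_t$ via sequential compactness (every subsequence has a further convergent subsequence), whereas the paper argues total boundedness directly by exhibiting a compact set $K = X \cup \{\gamma_{P^1}(t),\dots,\gamma_{P^N}(t)\}$ whose $\eta$-neighborhood contains $A_t$; both are standard and equivalent.
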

\begin{proof}
Let $\gamma_{P^i}:[0,1]\to \ell_\infty(\N)$ be the linearly interpolating curve and $L=\sup_{i\in \N} \len(P^i)<\infty$. To prove the claim, we first need to show that the sequence of linearly interpolating curves $\gamma_{P^i}$ satisfies the assumptions of Lemma \ref{lem:arzela-ascoli}. First, Lemma \ref{lem:basic} gives that $\gamma_{P^i}$ is $L$-Lipschitz. Second, let $t\in [0,1]$, and consider $A_t=\{\gamma_{P^i}(t) : k\in \N\}.$ We will show that $A_t$ is precompact in $\ell_\infty(\N)$ by showing that $A_t$ is totally bounded.

Fix $\eta>0$, and $N\in \N$ so that for $i\geq N$ we have $\mesh(P^i)\leq \eta$. Let $K=X\cup\{\gamma_{P^1}(t), \dots, \gamma_{P^N}(t)\}$. We claim that $d(a,K)\leq \eta$ for each $a=\gamma_{P^i}(t)\in A_t$. For each $i\leq N$ this claim is trivial. For each $i> N$, by Lemma \ref{lem:basic}, there exists a $p^i \in P^i$ so that $d(\gamma_{P^i}(t),p^i)\leq \mesh(P^i)\leq \eta$. Since $p^i \in X$, we have $d(\gamma_{P^i}(t),X)\leq \eta$. 

The set $K$ is compact, and thus totally bounded. Thus, we can points $x_1,\dots, x_M \in K$ so that $K\subset \bigcup_{j=1}^M B(x_j,\eta)$. Combining with the previous paragraph, we get $A_t\subset \bigcup_{j=1}^M B(x_j,2\eta)$, which proves the totally boundedness.

Now, by Lemma \ref{lem:basic}, we have that a subsequence of $\gamma_{P^i}$ converges to some curve $\gamma:[0,1]\to \ell_\infty(\N)$. By Lemma \ref{lem:basic}, $d(\gamma(t),X)\leq \limsup_{i\to\infty} d(\gamma_{P^i}(t),X)\leq \lim_{i\to\infty} \mesh(P^i) = 0$, and thus the image of  $\gamma$ is contained in $X$. 
\end{proof}

If we were to work only in proper metric spaces, the previous Lemma would be quite sufficient for the proof of Theorem \ref{thm:densinenergy}. However, for non-proper spaces, we need to force the discrete curves to ``not pass far'' from a sequence of compact sets.

First, for a sequence of non-empty compact sets $K_n\subset X$, with $K_n\subset K_m$ for $n\leq m$, we call a sequence of continuous bounded functions $h_n:\ell_\infty(\N)\to [0,\infty]$ defined by
\begin{equation}\label{eq:defhn}
h_n(x)\defeq\sum_{k=1}^n \min\{nd(x,K_k),1\}.
\end{equation}
as \emph{a good sequence of functions} for $\{K_k\}_{k\in\N}$.

These functions penalize paths that travel far away from the compact sets $K_n$. Indeed, if $d(x,K_n)\geq \eta$, then $h_n(x)\geq n\min\{n\eta,1\}.$ If we \emph{assume} that a sum involving $h_n$ over a discrete path $P$ is controlled, then we can use this bound effectively to conclude that $P$ is contained within an $\eta$-neighborhood of $K_n$.

Since the proof of the following Lemma is nearly identical to the previous Lemma, we will slightly abbreviate the proof, and utilize the same notation.

\begin{lemma}\label{lem:compactness} Let $K_n\subset X$ be a sequence of compact sets and let  $M,L,\Delta>0$ be constants. Let further $h_n$ be a good sequence of functions for $\{K_n\}_{n\in \N}$. If $\{P^i=(p_0^i, \dots, p_{n(i)}^i)\}_{i\in \N}$ is a sequence of discrete paths, with $\lim_{i\to \infty} \mesh(P^i)=0$, $\len(P^i)\leq L$, $\diam(P^i)\geq \Delta$ and 
\[
\sum_{k=0}^{n(i)-1} h_i(p_k^i)d(p_k^i,p_{k+1}^i)\leq M
\]
for each $i\in \N$, then there exists a subsequence of $P^i$ which converges to a curve $\gamma:[0,1]\to X$.
\end{lemma}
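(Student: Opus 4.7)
My strategy is to apply the Arzel\`a--Ascoli Lemma \ref{lem:arzela-ascoli} to the linearly interpolating curves $\gamma_{P^i}:[0,1]\to \ell_\infty(\N)$. By Lemma \ref{lem:basic} each $\gamma_{P^i}$ is $\len(P^i)$-Lipschitz, hence uniformly $L$-Lipschitz, so I only need to verify that for every $t_0\in [0,1]$ the set $A_{t_0}=\{\gamma_{P^i}(t_0):i\in\N\}$ is pre-compact in $\ell_\infty(\N)$. Once this is in hand, a subsequence $\gamma_{P^{i_k}}$ converges uniformly to an $L$-Lipschitz curve $\gamma:[0,1]\to\ell_\infty(\N)$, and Lemma \ref{lem:basic} together with $\mesh(P^{i_k})\to 0$ and completeness of $X$ (hence closedness in $\ell_\infty(\N)$) forces the image of $\gamma$ to lie in $X$; this is exactly the convergence of $P^{i_k}$ to $\gamma:[0,1]\to X$.

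\textbf{The key claim.} Fix $t_0\in[0,1]$ and $\eta>0$. I will show there exists $n_0\in\N$ with $d(\gamma_{P^i}(t_0),K_{n_0})<\eta$ for all but finitely many $i$. This suffices, because the $\eta$-neighborhood of the compact set $K_{n_0}$ is totally bounded in $\ell_\infty(\N)$, and hence $A_{t_0}$, lying in that neighborhood off a finite exceptional set, is totally bounded, i.e.\ pre-compact. Suppose the claim fails. A diagonal extraction then produces a subsequence $i_j\to\infty$ with $i_j\geq j$ and $d(\gamma_{P^{i_j}}(t_0),K_j)\geq\eta$. Using the $L$-Lipschitz property this bound propagates to the whole interval $I\defeq[t_0-\eta/(2L),t_0+\eta/(2L)]\cap[0,1]$: we get $d(\gamma_{P^{i_j}}(t),K_j)\geq\eta/2$ for $t\in I$, and $|I|\geq \min(\eta/L,1)>0$.

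\textbf{Using the penalty functions.} For each $t\in I\cap[t_k,t_{k+1}]$, Lemma \ref{lem:basic} gives $d(p_k^{i_j},\gamma_{P^{i_j}}(t))\leq \mesh(P^{i_j})$. Taking $j$ so large that $\mesh(P^{i_j})\leq\eta/4$ and $i_j\geq 4/\eta$, we obtain $d(p_k^{i_j},K_j)\geq\eta/4$. Because the $K_l$ are nested, $K_l\subset K_j$ for $l\leq j$, so in fact $d(p_k^{i_j},K_l)\geq\eta/4$ for every $1\leq l\leq j$, and each of these terms in the definition \eqref{eq:defhn} of $h_{i_j}$ attains the value $1$; thus $h_{i_j}(p_k^{i_j})\geq j$. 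Letting $K'=\{k:[t_k,t_{k+1}]\cap I\neq\emptyset\}$, the reparametrization identity $t_{k+1}-t_k=d(p_k^{i_j},p_{k+1}^{i_j})/\len(P^{i_j})$ together with $\len(P^{i_j})\geq\diam(P^{i_j})\geq\Delta$ yields $\sum_{k\in K'}d(p_k^{i_j},p_{k+1}^{i_j})\geq |I|\,\len(P^{i_j})\geq |I|\Delta$. Combining with the hypothesis,
\[
M\ \geq\ \sum_{k\in K'}h_{i_j}(p_k^{i_j})\,d(p_k^{i_j},p_{k+1}^{i_j})\ \geq\ j\,\Delta\,|I|,
\]
which is impossible for $j$ large, establishing the claim and completing the proof.

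\textbf{Main obstacle.} The crux is converting the single-index assertion $d(\gamma_{P^{i_j}}(t_0),K_j)\geq\eta$ into a lower bound on $h_{i_j}$ that grows with $j$. The crucial observation is the nestedness $K_l\subset K_j$ for $l\leq j$: being far from $K_j$ automatically means being far from every earlier $K_l$, turning one "far" bound into $j$ unit contributions in $h_{i_j}$. This is precisely why the sum in \eqref{eq:defhn} runs over all $1\leq k\leq n$, and it is what allows the argument to avoid properness of $X$.
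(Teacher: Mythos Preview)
Your argument is correct and follows essentially the same route as the paper: both apply Lemma~\ref{lem:arzela-ascoli} to the interpolating curves $\gamma_{P^i}$ and establish precompactness of $A_t$ by showing that if $\gamma_{P^i}(t)$ stayed $\eta$-far from $K_n$ for large $n$, then nestedness of the $K_l$'s forces $h_i$ to be at least $n$ along a portion of the path of definite length, contradicting the $h$-sum bound $M$. The only tactical difference is packaging: the paper builds the enveloping compact set $K=K_{\tilde N}\cup\{\gamma_{P^i}(t):i\le\tilde N\}$ directly with explicit constants and isolates the offending portion as a \emph{sub-path} $Q^i\subset P^i$ contained in a small ball around a nearby vertex $p^i$, whereas you argue by contradiction, extract a diagonal subsequence with $i_j\ge j$, and propagate the distance bound to a \emph{time-interval} $I$ via the $L$-Lipschitz property of $\gamma_{P^{i_j}}$; the use of $\diam(P^i)\ge\Delta$ to bound $\len(P^i)$ from below is identical. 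One harmless slip: when $t_0$ is an endpoint of $[0,1]$ you only get $|I|\ge\min(\eta/(2L),1)$ rather than $\min(\eta/L,1)$, but this does not affect the contradiction.
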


\begin{proof}
The proof proceeds in the same way as Lemma \ref{lem:easiercompact}. First, for every $i\in \N$ the curve $\gamma_{P^i}$ is $L-$Lipschitz. Second, if a subsequence of $\gamma_{P^i}$ converges to some curve $\gamma:[0,1]\to \ell_\infty(\N)$, then by Lemma \ref{lem:basic}, we have $d(\gamma(t),X)\leq \limsup_{i\to\infty} d(\gamma_{P^i}(t),X)\leq \lim_{i\to\infty} \mesh(P^i) = 0$ for each $t\in [0,1]$, and thus the image of  $\gamma$ is contained in $X$.

Thus, the only aspect to verify, is the second assumption of Lemma \ref{lem:arzela-ascoli}, i.e. that the set $A_t=\{\gamma_{P^i}(t) : k\in \N\}$ is precompact in $\ell_\infty(\N)$ for each $t\in [0,1]$. Equivalently, that it is totally bounded, which is equivalent to showing that for each $\eta>0$, we can find a compact set $K$ so that $d(a,K)\leq \eta$ for each $a\in A_t$.

Fix $\eta>0$ and $t\in [0,1]$. Fix $N\in \N$ so that $\mesh(P^i)\leq \eta/8$ for each $i\geq N$. Then, fix $T = 8\lceil \eta^{-1} \rceil +\lceil 2M\min\{\eta/8, \Delta\}^{-1}\rceil$, and set $\tilde{N}=\max\{N,T\}$. Set $K = K_{\tilde{N}} \cup \{\gamma_{P^i}(t): i=1,\dots, \tilde{N}\}$.

Take any $i\in \N$ and consider $\gamma_{P^i}(t)\in A_t$. If $i\leq \tilde{N}$, we have $d(\gamma_{P^i}(t),K)=0\leq \eta$. Next, assume $i\geq \tilde{N}$. By Lemma \ref{lem:basic}, there exists a $p^i\in P^i$ so that $d(p^i, \gamma_{P^i}(t))\leq \mesh(P^i)\leq \eta / 8$. If $d(p^i, K_{\tilde{N}})\leq \eta/2$ for each $i\geq \tilde{N}$, then $d(\gamma_{P^i}(t),K)\leq \eta$, and we are done. Suppose therefore that we have some index $i\geq \tilde{N}$, with $d(p^i, K_{\tilde{N}})\geq \eta/2$. We will show that this leads to a contradiction completing the proof.

Let $Q^i=(q_0^i, \dots, q_{m(i)}^i) \subset P^i$ be the largest sub-path of $P^i$ which contains $p^i$ and which is contained in $B(p^i, \eta/4)$. Since $\mesh(P^i)\leq \eta/8$, and $\diam(P^i)\leq \Delta$, we have $\diam(Q^i)\geq \min\{\eta/8, \Delta\}$. Then, $d(q,K_{\tilde{N}})\geq \eta/4\geq \tilde{N}^{-1}$ for each $q\in Q^i$. Thus, $d(q, K_j)\geq \tilde{N}^{-1}$ for each $q\in Q^i$ and $j\leq \tilde{N}.$ In particular $h_i(q)\geq h_{\tilde{N}}(q)\geq \tilde{N} \geq \lceil 2M \min\{\eta/8, \Delta\}^{-1}\rceil$. Thus,

\[\sum_{k=0}^{m(i)-1} h_{i}(q_k^i)d(q_k^i,q_{k+1}^i)\geq  \lceil 2M\min\{\eta/8, \Delta\}^{-1}\rceil \len(Q^i) \geq 2M.
\]

Since $Q\subset P$, we have
\[2M \leq \sum_{k=0}^{m(i)-1} h_{i}(q_k^i)d(q_k^i,q_{k+1}^i)\leq \sum_{k=0}^{n(i)-1} h_i(p_k^i)d(p_k^i,p_{k+1}^i) \leq M,
\]
which is a contradiction.
\end{proof}

We will also need a slightly technical lower semicontinuity statement reminiscent of \cite[Proposition 4]{keith03}. The sums that appear in the statement, and in Lemma \ref{lem:compactness}, should be thought of as discrete Riemann sums.

\begin{lemma}\label{lem:lowersemicont}
Let $g:X\to [0,\infty]$ be a lower semicontinuous function, and assume that $\{g_i:X\to[0,\infty]\}_{i\in \N}$ is an sequence of continuous functions which converges to $g$ pointwise, with $g_i(x)\leq g_j(x)$ for each $x\in X$, and each $i\leq j$.  If $\{P^i=(p_0^i, \dots, p_{n(i)}^i)\}_{i\in\N}$ is a sequence of discrete paths, with $\sup_{i\in \N} \len(P^i)<\infty$ and which converges to a curve $\gamma:[0,1]\to X$, then 
\[
\int_\gamma g~ds\leq \liminf_{i\to\infty}\sum_{k=0}^{n(i)-1} g_i(p_k^i)d(p_k^i, p_{k+1}^i).
\]
\end{lemma}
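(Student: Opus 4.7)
The plan is to rewrite the Riemann-like sum as an integral of step functions on $[0,1]$ and then apply Fatou's lemma. For each $i$ with $\len(P^i) > 0$, let $(t_k^i)$ be the interpolating times from Lemma \ref{lem:basic}, and define $\psi_i: [0,1] \to [0,\infty]$ by $\psi_i(t) = g_i(p_k^i)$ for $t \in [t_k^i, t_{k+1}^i)$. Since $\len(P^i)(t_{k+1}^i - t_k^i) = d(p_k^i, p_{k+1}^i)$, we have
\[
S_i \defeq \sum_{k=0}^{n(i)-1} g_i(p_k^i) d(p_k^i, p_{k+1}^i) = \int_0^1 \psi_i(t) \len(P^i)\, dt.
\]
After passing to a subsequence (not relabelled), I may assume $S_i \to \liminf_j S_j$ and $\len(P^i) \to L^*$ for some $L^* \in [0, \sup_j \len(P^j)]$. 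If $L^* = 0$, then $\gamma$, as the uniform limit of $\len(P^i)$-Lipschitz curves, is constant and $\int_\gamma g \, ds = 0$, making the inequality trivial; I henceforth assume $L^* > 0$.

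The heart of the argument is the pointwise lower bound $\liminf_i \psi_i(t) \geq g(\gamma(t))$ for every $t \in [0,1]$. Fix $t$ and let $k(i,t)$ be the index with $t \in [t_{k(i,t)}^i, t_{k(i,t)+1}^i)$, so that $\psi_i(t) = g_i(p_{k(i,t)}^i)$ with $p_{k(i,t)}^i = \gamma_{P^i}(t_{k(i,t)}^i)$. Because
\[
|t - t_{k(i,t)}^i| \leq \frac{\mesh(P^i)}{\len(P^i)} \to 0,
\]
and $\gamma_{P^i} \to \gamma$ uniformly with $\gamma$ continuous, the points $p_{k(i,t)}^i$ converge to $\gamma(t)$ in $X$. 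For each fixed $j$, the monotonicity $g_i \geq g_j$ for $i \geq j$ and the continuity of $g_j$ then give $\liminf_i \psi_i(t) \geq \liminf_i g_j(p_{k(i,t)}^i) = g_j(\gamma(t))$. Letting $j \to \infty$ and using the pointwise limit $g_j(\gamma(t)) \nearrow g(\gamma(t))$ produces the claimed bound.

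With the pointwise bound in hand, I apply Fatou's lemma to the nonnegative sequence $\psi_i(t)\len(P^i)$ and combine it with the fact that $\gamma$ is $L^*$-Lipschitz, so its metric speed $v_\gamma$ satisfies $v_\gamma \leq L^*$ a.e.:
\[
\int_\gamma g \, ds = \int_0^1 g(\gamma(t)) v_\gamma(t)\, dt \leq \int_0^1 g(\gamma(t)) L^* \, dt \leq \int_0^1 \liminf_i \bigl[\psi_i(t) \len(P^i)\bigr] dt \leq \liminf_i S_i.
\]

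The main obstacle is the pointwise $\liminf$ step, which requires carefully weaving together three ingredients: the uniform convergence $\gamma_{P^i} \to \gamma$ together with $\mesh(P^i) \to 0$ and $\len(P^i) \to L^* > 0$ (needed to conclude $p_{k(i,t)}^i \to \gamma(t)$); the continuity of each $g_j$ (to commute the limit with $g_j$); and the monotonicity $g_j \nearrow g$, which upgrades the bound from $g_j(\gamma(t))$ to $g(\gamma(t))$ and is effectively where the lower semicontinuity of $g$ enters.
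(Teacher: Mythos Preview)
Your proof is correct and takes a genuinely different route from the paper's. The paper first extends each $g_i$ (and then $g$) continuously to $\ell_\infty(\N)$ via Tietze, so that the line integrals $\int_{\gamma_{P^j}} g_i\,ds$ over the interpolating curves make sense; it then uses uniform continuity of $g_i$ on the compact set formed by the images of all $\gamma_{P^j}$ and $\gamma$ to show that the discrete Riemann sum $\sum_k g_i(p_k^j)d(p_k^j,p_{k+1}^j)$ approximates $\int_{\gamma_{P^j}} g_i\,ds$, invokes lower semicontinuity of curve integrals under uniform convergence (citing Keith) to pass to $\int_\gamma g_i\,ds$, and finally sends $i\to\infty$ by monotone convergence. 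Your argument bypasses the extension to $\ell_\infty(\N)$ and the cited lower semicontinuity lemma entirely: by rewriting the sum as $\int_0^1 \psi_i(t)\len(P^i)\,dt$ and proving the pointwise bound $\liminf_i \psi_i(t)\geq g(\gamma(t))$ directly from $p_{k(i,t)}^i\to\gamma(t)$, continuity of each $g_j$, and monotonicity, you reduce everything to Fatou on $[0,1]$ and the elementary inequality $v_\gamma\leq L^*$. The paper's approach packages the passage to the limit into a known lower semicontinuity statement for curve integrals, while yours is more self-contained and avoids working in the ambient $\ell_\infty(\N)$; both rely on the monotonicity $g_i\leq g_j$ in the same essential way. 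One small remark: your convergence $p_{k(i,t)}^i\to\gamma(t)$ can be obtained even more directly from $d(p_{k(i,t)}^i,\gamma_{P^i}(t))\leq \mesh(P^i)$ (Lemma~\ref{lem:basic}) together with uniform convergence, which makes the bound on $|t-t_{k(i,t)}^i|$ unnecessary.
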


\begin{proof} Let $\gamma_{P^i}$ be the linearly interpolating curves to $P^i$ and let $L=\sup_{i\in \N} \len(P^i)$.  Use the Tietze extension theorem to extend each $g_i:X \to \R$ to be a continuous function on $\ell_\infty(\N)$. By constructing the extensions recursively and taking maxima, we can ensure $g_i \leq g_j$ for $i\leq j$. Extend the function $g$ to a lower semicontinuous function on $\ell_\infty(\N)$ to satisfy  $g(x)=\lim_{i\to\infty} g_i(x)$, for all $x\in \ell_\infty(\N)$.  Denote the extensions still by the same letter. 

Fix for the moment an index $i\in \N$ and consider the function $g_i$. Since $\gamma_{P^j}$ converges uniformly to $\gamma$, when $j\to\infty$, we have that the set $K\subset \ell_\infty(\N)$ formed by the union of the images of the curves $\gamma_{P^j}, \gamma$ ($j\in \N$) is compact. On $K$, the function $g_i$ is uniformly continuous. 

Fix an $\epsilon>0$. Since $g_i|_K$ is uniformly continuous, there exists a $\delta>0$ so that if $x,y\in K$ and $d(x,y)\leq \delta$, then $|g_i(x)-g_i(y)|\leq \epsilon/L$. Choose then $N$ so large that $\mesh(P^j)\leq \delta$ for each $j\geq N$. If $T_{P^j}=(t^j_0, \dots, t^j_{n(j)})$ is the sequence of interpolating times for $P^j$, then $d(\gamma_{P^j}(t),p^j_k)\leq \mesh(P^j)\leq \delta$ for each $t\in [t^j_k, t^j_{k+1}]$ ($k=0,\dots, n(j)-1$). Consequently, for each $j\geq N$, we get
\begin{align*}
\left|\int_{\gamma_{P^j}} g_i(x) ~ds_x- \sum_{k=0}^{n(j)-1} g_i(p^{j}_k) d(p^{j}_k,p^{j}_{k+1})\right|&\leq \left|\sum_{k=0}^{n(j)-1} \int_{\gamma_{P^j}|_{[t_k^j,t_{k+1}^j]}} g_i(x) ~ds_x- \!\!\sum_{k=0}^{n(j)-1} g_i(p^{j}_k) d(p^{j}_k,p^{j}_{k+1})\right|\\
&=\left|\sum_{k=0}^{n(j)-1} \int_{\gamma_{P^j}|_{[t_k^j,t_{k+1}^j]}} (g_i(x)-g_i(p_j^k)) ~ds_x\right|\\
&\leq \len(P^j)\epsilon/L \leq \epsilon.
\end{align*}
On the last line we used the fact that $\len(P^j)=\len(\gamma_{P^j})$ by Lemma \ref{lem:basic}. Since $\epsilon>0$ was arbitrary, by sending $j\to\infty$ we get
\begin{align*}
\lim_{j\to\infty}& \left|\int_{\gamma_{P^j}} g_i(x) ~ds- \sum_{k=0}^{n(j)-1} g_i(p^{j}_k) d(p^{j}_k,p^{j}_{k+1})\right|=0.
\end{align*}

Combine this with the lower semi-continuity of curve integrals (see e.g.  the argument in \cite[Proposition 4]{keith03}) and the fact that $g_i\leq g_j$ for $i\leq j$, we have for each $i\in \N$

\begin{align*}
\int_\gamma g_i ~ds &\leq  \liminf_{j\to\infty} \int_{\gamma_{P^j}} g_i(x) ~ds =  \liminf_{j\to\infty}\sum_{k=0}^{n(j)-1} g_i(p_k^j)d(p_k^j, p_{k+1}^j) \\
&\leq  \liminf_{j\to\infty}\sum_{k=0}^{n(j)-1} g_j(p_k^i)d(p_k^j, p_{k+1}^j).
\end{align*}

Now, by letting $i\to\infty$ and by using monotone convergence on the left hand side, we obtain the statement of the lemma.

\end{proof}

\subsection{Proof of Theorem \ref{thm:densinenergy}}

With the stage set, we are now able to prove the main theorem of the paper. The proof will proceed by first giving some reductions, stating a goal, and then constructing a sequence of approximations that reaches this goal. The tricky bit of the proof is showing a pointwise convergence result for the approximating sequence, which is done using the Lemmas above and a contradiction argument.

\begin{proof}[Proof of Theorem \ref{thm:densinenergy}] Let $f\in N^{1,p}(X)$ and $g_f(x)  \in L^p(X)$ be its minimal $p$-weak upper gradient. We proceed to find an approximation in a series of steps. First, we reduce the claim to non-negative functions, and show that bounded functions are dense in  $N^{1,p}(X)$. Then we show that boundedly supported functions are dense among these bounded functions. Finally, we show that any non-negative function with bounded support can be approximated by a Lipschitz function in energy. If we wish to find the approximating sequence of Lipschitz functions for $f$, we would employ these three approximation schemes and use a diagonal argument.
\vskip.3cm
\noindent \textbf{Reduction to non-negative:} We can write $f=f_+ - f_-,$ where $f_+=\max\{f,0\}$ and $f_- = \max\{-f,0\}$. Then $g_f = g_{f_+}+ g_{f_-}$, by \cite[Proposition 6.3.22]{shabook}\footnote{This proposition states that if $u,v\in N^{1,p}(X)$ and $u=v$ on a set $A \subset X$, then for $\mu$-almost every $x \in A$ we have $g_u(x)=g_v(x)$ for their minimal $p$-weak upper gradients.} and if we approximate $f_\pm$ in energy by a sequence $f_{\pm}^n$, then we get that $f_{+}^n-f_{-}^n$, by Lemma \ref{lem:convergence}, approximates $f$ in energy. In what follows, we assume that $f$ is non-negative.
\vskip.3cm
\noindent \textbf{Reduction to a bounded case:} Consider first functions $f_M =  \min\{f,M\}$ for all $M>0$. Then, we have $f_M \to_{M\to\infty} f$ in $L^p(X)$, and $g_M=g_f|_{X \setminus f^{-1}[0,M]}$ is a minimal $p$-weak upper gradient for $f-f_M$ \cite[Proposition 6.3.22]{shabook}.  Indeed, then $g_M \to 0$ in $L^p$. Thus, $f_M$ converges to $f$ in norm in $N^{1,p}(X)$, and we have that bounded functions are dense. Assume thus in what follows, that there is some $M$ so that $0\leq f\leq M$.

\vskip.3cm
\noindent \textbf{Reduction to bounded support:}
Similarly, if $x_0 \in X$ is any fixed point, and we consider the sequence $f_R(x) = f \psi_R(x)$, where $\psi_R(x) = \max\{0,\min\{1, R-d(x_0,x)\}\}$ and $R\in \N$, $R>0$. The sequence $0\leq \psi_R \leq 1$ is $1$-Lipschitz and $f_R\to f$ pointwise and in $L^p(X)$. Further, the function $f-f_R$ has a weak upper gradient $g_R=1_{X \setminus B(x_0,R-1)}g_f + f 1_{B(x_0,R+1)\setminus B(x_0,R)}$ as follows from the Leibniz rule in \cite[Proposition 6.3.28 and Proposition 6.3.22]{shabook}. Thus $g_R \to 0$ and we get that $f_R\to f$ in $N^{1,p}(X)$. Therefore functions with bounded support are dense, and we may assume that there is an $R\geq 4$ and $x_0 \in X$ with $f(x) = 0$ for $x\in X \setminus B(x_0,R)$.  

\vskip.3cm
\noindent \textbf{Goals of proof:} Let $\epsilon \in (0,1)$ be fixed. We  will show that we can find functions $g_\epsilon \in L^p(X)$ and $f_\epsilon \in \LIP_b(X) \cap N^{1,p}(X)$ so that $g_\epsilon \geq \lip_a[f_\epsilon]$,
\begin{equation}\label{eq:gepsilon}
\int_X |g_\epsilon-g_f|^p d\mu \leq \epsilon,
\end{equation}
and
\begin{equation}\label{eq:fcondition}
\int_X |f-f_\epsilon|^p d\mu \leq \epsilon. 
\end{equation}
The claim of the Theorem then follows directly from Lemma \ref{lem:convergence}, together with $\lip_a[f_\epsilon] \geq g_{f_\epsilon}$, by choosing a sequence of $\epsilon$ with $\epsilon \searrow 0$.

\vskip.3cm 
\noindent \textbf{Choice of $g_\epsilon$ and $g_n$:} First,  by Remark \ref{rmk:vitalicaratheodory} we can choose a lower semicontinuous function $g_1(x) \geq g_f$ with the property that $g_1$ is a lower semicontinuous upper gradient for $f$ and so that 

\begin{equation}\label{eq:g1}
\int_X |g_1-g_f|^p d\mu \leq \epsilon 4^{-2p}.
\end{equation}

Further, since $0$ is an upper gradient for $f$ when restricted to the set $X\setminus B(x_0,R)$, we can set $g_1(x)=g_f(x)=0$ for all $x \in X \setminus B(x_0, 4R)$. This follows directly from the definition of an upper gradient, and cutting the curve appearing in the defining inequality \eqref{eq:ug} to pieces contained within the ball $B(x_0,2R)$, and its complement.

By Lusin's theorem and inner regularity of $\mu$, we can choose an increasing sequence of compact sets $K_n \subset B(x_0,2R)$ so that $\mu(B(x_0,2R) \setminus K_n) \leq \epsilon n^{-p}4^{-n-2p}$ and so that $f|_{K_n}$ is continuous. Choose a $\sigma \in (0,1)$ so that $\mu(B(x_0,4R))\sigma^p \leq \epsilon 4^{-2p}$, and define again $\psi_{2R}(x) = \max\{0,\min\{1, 2R-d(x_0,x)\}\}$

Define 
\begin{equation}\label{eq:gepsdef}
g_\epsilon(x) \defeq g_1(x) + \sigma \psi_{2R}(x)+\sum_{n=1}^\infty 1_{B(x_0,2R) \setminus K_n}(x). 
\end{equation}
Inequality \eqref{eq:gepsilon} follows from  \eqref{eq:gepsdef} and \eqref{eq:g1}. Also, $g_\epsilon$ is lower-semicontinuous. 
 
 Choose any increasing sequence of bounded continuous functions $\{\tilde{g}_n\}_{n\in\N}$ converging to $g_1$ with  $0\leq \tilde{g}_m \leq \tilde{g}_n \leq g_1$ for $m\leq n$ and define 
 \begin{equation}\label{def:gn}
 g_n(x) = \tilde{g}_n(x)  +\sigma \psi_{2R}(x)+ \sum_{k=1}^n \min\{nd(x,K_k),1_{B(x_0,2R)}(x)\} .
 \end{equation}
 Then, we get that $0 \leq g_n\leq g_\epsilon$ and $g_n$ converges to $g_\epsilon$ as $n \to \infty$.  Finally, choose $L$ so that $\mu(B(x_0,2R) \setminus K_L) \leq \epsilon (2M)^{-p}$, and define the closed set $A\defeq K_L \cup X\setminus B(x_0,R)$. Since $f|_{K_L}$ and $f|_{X\setminus B(x_0,R)}$ are continuous, then $f|_A$ is continuous.
 
 We remark, that in equations \eqref{eq:gepsdef} and \eqref{def:gn}, we could avoid adding the summation term to $g_\epsilon$ and $g_n$ if the space was proper. This is one place where properness would yield a simplification. In this case, we would use Lemma \ref{lem:easiercompact} (with a bounded subset $\overline{B(x_0,2R)}$ replacing $X$) instead of Lemma \ref{lem:compactness}.
 \vskip.3cm 
\noindent \textbf{Approximating sequence $f_n$:} Define for each $n\in \N$ the approximating function $f_n$ with data $(f,g_n,A,M,n^{-1})$ by the formula

\begin{equation}\label{eq:fndef}
f_n(x):=\min\left\{M,\inf_{p_0,\dots,p_N} f(p_0)+\sum_{k=0}^{N-1} g_n(p_k)d(p_k,p_{k+1})\right\},
\end{equation}
where the infimum is taken over all $(n^{-1},A,x)$ admissible discrete paths $(p_0,\dots, p_N)$. 

Lemma \ref{lem:approx} shows that that $f_n:X\to [0,M]$ is a Lipschitz functions with $\lip_a[f_n]\leq g_n\leq g_\epsilon$, and $f_n|_A\leq f|_A$. From this lemma, we also get that $f_n(x)=0$ for each $x\not\in X\setminus B(x_0,R)$. Thus $f_n \in \LIP_b(X)\cap N^{1,p}(X)$.

The main step remaining is to show that for each $x\in K_L$ we have $\lim_{n\to\infty}f_n(x) = f(x)$ -- that is, we have pointwise convergence. Indeed, suppose that we have shown this. By Lebesgue dominated convergence and  $f_n(x)=f(x)=0$ on $A\setminus K_L$, we get for $n$ large enough that 
\[
\int_{A} |f_n-f|^p ~d\mu = \int_{K_L} |f_n-f|^p ~d\mu \leq \epsilon/2.\]
 We have $f_n(x)=f(x)=0$ for $x\in X\setminus B(x_0,R)$ and $f_n(x),f(x) \in [0,M]$ for $x\in B(x_0,R)\setminus K_L$. Thus, by the choice of $L$, we get

\[
\int_X |f-f_n|^p ~d\mu = \int_{K_L} |f-f_n|^p ~d\mu + \int_{B(x_0,R)\setminus K_L} |f-f_n|^p ~d\mu \leq \epsilon/2 + M^p \mu(B(x_0,R)\setminus K_L) \leq \epsilon.
\]

Thus,  the functions $f_\epsilon=f_n$ of $n$ large enough and $g_\epsilon$  realize all the aspects of the goal of the proof. We are left to show that $f_n|_{K_L}$ converges to $f|_{K_L}$ pointwise.\footnote{In fact, it follows from Dini's theorem, since $f_n\leq f_m$ for $n\leq m$, that $f_n|_{K_L}$ converges uniformly to $f|_{K_L}$. We do not need this claim here. }

\vskip.3cm \noindent \textbf{Pointwise convergence $\lim_{n\to\infty} f_n(x) = f(x)$ for $x\in K_L$:} Assume that there exists some  $x\in K_L$ for which pointwise convergence fails.  

For $n\geq m$ we have $g_n\leq g_m$ and thus $f_n\leq f_m$, since for each $x\in X$ any $(m^{-1},A,x)$-admissible path is also $(n^{-1},A,x)$-admissible. Thus, the sequence $\{f_n(x)\}_{n\in\N}$ is increasing in $n\in \N$ and by Lemma \ref{lem:approx} we have $f_n(x) \leq f(x)$. In particular, the limit $\lim_{n\to\infty} f_n(x)$ exists. Since the limit is not equal to $f(x)$,  there exists a constant $\delta>0$, so that $\lim_{n\to\infty}f_n(x) \leq f(x)-\delta$. Since we have $f_n(x)\geq 0$ for each $n\in \N$, we also must have $f(x)\geq \delta$, and so $x\in B(x_0,R)$.

Further, since $f(x)\leq M$, we get $f_n(x) \leq M-\delta$ for each $n\in\N$.  By the definition  of $f_n(x)$, we obtain discrete paths $P^n=(p^n_{0},\dots, p^{n}_{N_n})$ which are $(n^{-1},A,x)$-admissible, and with 

\begin{equation}\label{eq:boundn}
f(p_0^n) + \sum_{k=0}^{N_n-1} g_n(p^n_k) d(p^n_k,p^n_{k+1}) < f(x)-\delta/2 \leq M.
\end{equation}
 Our contradiction will be obtained by finding a curve in the limit of these discrete paths.

First, we restrict to a sub-path. Let $Q^n=(q^n_0, \dots,q^n_{M_n})$ be the largest sub-path of $P^n$ with $q^n_{M_n}=x$ and with $Q^n \subset B(x_0,3R/2)$. If $Q^n=P^n$, then clearly $Q^n$ is $(n^{-1},A,x)$-admissible and $f(q_0^n)=f(p_0^n)$. If $Q^n \subsetneq P^n$, then since $\mesh(P^n)\leq 1 < R/2$ and since $Q^n$ is the largest sub-path, we must have $q^n_0 \in B(x_0,3R/2) \setminus B(x_0,R)$.  Thus, $q^n_0\in A$, and $Q^n$ is still $(n^{-1},A,x)$-admissible. Also, $0=f(q_0^n)\leq f(p_0^n)$. These give that

\begin{equation}\label{eq:boundq}
f(q_0^n) + \sum_{k=0}^{M_n-1} g_n(q^n_k) d(q^n_k,q^n_{k+1}) \leq f(p_0^n) + \sum_{k=0}^{N_n-1} g_n(p^n_k) d(p^n_k,p^n_{k+1}) < f(x)-\delta/2 \leq M.
\end{equation}

We will show that $\{Q^n\}_{n\in \N}$ satisfies the assumptions of Lemma \ref{lem:compactness}.  This consists of verifying the mesh, lengh, diameter and $h$-sum conditions. Here, we would only need to verify the mesh-, and length- conditions and use Lemma \ref{lem:easiercompact} on the subset $\overline{B(x_0,2R)}$, if $X$ was proper. Fix $n\in \N$.

\begin{enumerate}
\item \textbf{Mesh:} We have $\lim_{n\to\infty}\mesh(Q
^n)\leq \lim_{n\to\infty} n^{-1}=0$, since $Q^n$ is $(n^{-1},A,x)$-admissible.
\item \textbf{Length:}  
From \eqref{def:gn} and $Q^n \subset B(x_0,3R/2)$ (and $R\geq 4$),  we get that $g_n(q)\geq \sigma$ for every $q\in Q^n$. Thus, by \eqref{eq:boundq} and by setting $L\defeq \sigma^{-1}M$ we get

\begin{equation}\label{eq:boundlen}
\len(Q^n) = \sigma^{-1}\sum_{k=0}^{M_n-1} \sigma d(q^n_k,q^n_{k+1}) \leq \sigma^{-1}\left(f(q_0^n) + \sum_{k=0}^{M_n-1} g_n(q^n_k) d(q^n_k,q^n_{k+1})\right) \leq L.
\end{equation}
\item \textbf{Diameter:} 
We stated earlier in the proof that $f|_{A}$ is continuous. Thus, we can find a constant $\Delta>0$, so that $y\in A$ and $d(x,y) \leq \Delta$ implies that $|f(x)-f(y)|\leq \delta/4$.  We get $f(q_0^n)<f(x)-\delta/2$ from Equation \eqref{eq:boundq}. Since $q_0^n\in A$, we get $d(q_0^n,x)=d(q_0^n,q_{M_n}^{n})\geq \Delta$. In particular, $\diam(Q^n)\geq \Delta$.
\item \textbf{$h$-sum:} Let $h_n$ be the good sequence of functions for $\{K_n\}$, which were defined in Equation \eqref{eq:defhn}. Note that, by definition $h_n|_{B(x_0,2R)}\leq g_n|_{B(x_0,2R)}$, and thus again from Equation \eqref{eq:boundq}, 
\[
\sum_{k=0}^{M_n-1} h_n(q^n_k) d(q^n_k,q^n_{k+1}) \leq \sum_{k=0}^{M_n-1} g_n(q^n_k) d(q^n_k,q^n_{k+1}) \leq M.
\]
\end{enumerate}

As a consequence, Lemma \ref{lem:compactness} shows that a subsequence of $Q^n$ converges to a  curve $\gamma:[0,1]\to X$. By reindexing, we may pass to this subsequence.
  
Since $q_0^n\in A$, and $\lim_{n\to\infty} q_0^n =\gamma(0)$, we get that $\gamma(0)\in A$. Similarly, $\gamma(1)=x$. Recall that $f|_{A}$ is continuous. Therefore, $f(\gamma(0)) = \lim_{n\to\infty} f(q^{n}_0)$.  Then, by Lemma \ref{lem:lowersemicont} applied to $Q^n,\gamma,g_n$ and $g_\epsilon$ with $g_n$ converging to $g_\epsilon$, we get
 
 \[
 f(\gamma(0)) + \int_\gamma g_\epsilon ~ds  \leq \liminf_{n\to\infty} f(q^{n}_0) + \sum_{k=0}^{M_n-1} g_{n}(q^n_k) d(q^{n}_k,q^{n}_{k+1}) \leq f(x)-\delta/2.\]
 
 In the last step, we used Inequality \eqref{eq:boundq}.  We obtain
 
 \[
 f(\gamma(1))-f(\gamma(0)) > \int_\gamma g_\epsilon ~ds,
 \] 
 which contradicts $g_\epsilon$ being a true upper gradient.

\end{proof}

\begin{remark} The proof above shows a more technical statement, which we highlight for purposes of future work. Suppose that $f$ is non-negative, boundedly supported in $B(x_0,R)$ and bounded by $M$ and has a lower semi-continuous upper gradient $g_1$.  Then if we fix any $\sigma>0$, and any increasing sequence of compact sets $K_n$, and define $g_\epsilon$ as in \eqref{eq:gepsdef},  there is a sequence of Lipschitz functions $f_m$ with $\lip_a[f_m] \leq g_\epsilon$ and $f_m \to f$ pointwise on each compact set $K_L$, for $L$ fixed.  Further,  if $X$ is proper and $f$ is continuous, then we do not need to consider the exhaustion by compact sets $K_n$ and get convergence on all of $B(x_0,R)$.  Also, if $g_1$ is bounded below on $B(x_0,2R)$, we do not even need to add $\sigma \psi_{2R}$, which is only added to ensure the length bound for $P^n$. 

A natural follow up work would consider these techniques in other Banach function spaces and associated Newton-Sobolev spaces, such as authors have done in \cite{helisobolev, lorentzsob,hasto1,hasto2}. See also the versions in general Banach function spaces in \cite{malylukas}.  
In these other function spaces, one would need to first ensure a lower semi-continuous upper gradient $g_1$ which is close in norm to the minimal one (by a version of Vitali-Caratheodory as in Remark \ref{rmk:vitalicaratheodory} and  \cite{minimallukas}). Then, check an appropriate version of Lemma \ref{lem:convergence}.  Finally, one would need to argue that the choices of $K_n$, and $\sigma$ can be made so that $g_\epsilon$ and $g_1$ are close in norm -- which relies on some absolute continuity and monotone convergence in the applicable Banach function space. If one wishes, in proper metric spaces this should be slightly easier.  For this argument, some form of Vitali-Caratheodory theorem holding for the Banach function space seems necessary, see \cite{malylukas}.  Further ideas or techniques,  such as some form of differential structure,  would be needed to upgrade the density in energy to density in norm. For such ideas, see \cite{teriseb}.
\end{remark}

\bibliographystyle{acm}
\bibliography{pmodulus}
\def\cprime{$'$}

\end{document}